\documentclass{amsart}
\pdfoutput=1
\setcounter{tocdepth}{2}

\usepackage{float}
\restylefloat{figure}
\floatplacement{figure}{t}

\usepackage{mathtools}
\usepackage{amssymb}
\usepackage{mathrsfs}
\usepackage{bm}
\usepackage[all]{xy}
\UseComputerModernTips
\CompileMatrices
\usepackage[colorlinks,allcolors=blue]{hyperref}
\usepackage{ifthen}
\usepackage{latexsym}
\usepackage{tikz}
\usepackage{tikz-3dplot}
\usetikzlibrary{backgrounds}
\allowdisplaybreaks
%
\numberwithin{equation}{section}
%
\newtheorem{theorem}[equation]{Theorem}
\newtheorem{lemma}[equation]{Lemma}
\newtheorem{proposition}[equation]{Proposition}
\newtheorem{corollary}[equation]{Corollary}
\newtheorem*{bezoutOne}{B\'ezout's Theorem, Part I}
\newtheorem*{bezoutTwo}{B\'ezout's Theorem, Part II}
%
\theoremstyle{definition}
\newtheorem{definition}[equation]{Definition}

\newtheorem{context}[equation]{B\'ezout Context}
%
\theoremstyle{remark}
\newtheorem{remark}[equation]{Remark}

\hyphenation{pa-ra-met-rized}
\renewcommand{\phi}{\varphi}
\DeclareMathSymbol{\boxprod}{\mathbin}{AMSa}{"03} 
\DeclareMathSymbol{\mixprod}{\mathbin}{AMSa}{"4F} 

\newcommand{\dirsum}{\oplus}

\newcommand{\disjunion}{\sqcup}

\newcommand{\dual}{^\vee}

\newcommand{\includesin}{\hookrightarrow}

\newcommand{\iso}{\cong}
\newcommand{\Mackey}[1]{{\underline {#1}}}

\newcommand{\smsh}{\wedge}

\newcommand{\tensor}{\otimes}
\newcommand{\union}{\cup}

\newcommand{\C}{{\mathbb C}}



\newcommand{\PP}{\mathbb{P}}
\newcommand{\R}{{\mathbb R}}

\newcommand{\Z}{\mathbb{Z}}
\newcommand{\ZZ}{\mathbb{Z}}
%
%
\newcommand{\GG}{{C_2}}
\newcommand{\HA}{H_\GG^{RO(\GG)}}
%

\newcommand{\cwt}{\zeta_1}
\newcommand{\cxwt}{\zeta_0}
\newcommand{\cwd}{\,\widehat{c}_\omega}
\newcommand{\cxwd}{\,\widehat{c}_{\chiw}}
\newcommand{\cd}[1][{}]{\,\widehat{c}^{\;#1}}
%
\newcommand{\Cpq}[2]{\C^{#1+#2\sigma}}
\newcommand{\Cq}[1]{\C^{#1\sigma}}
\newcommand{\Xpq}[2]{\PP(\Cpq{#1}{#2})}
\newcommand{\Xp}[1]{\PP(\C^{#1})}
\newcommand{\Xq}[1]{\PP(\Cq{#1})}

\newcommand{\chiw}{\chi\omega}

%
\newcommand{\I}{{\mathrm{I}}}
\newcommand{\II}{{\mathrm{II}}}
\newcommand{\III}{{\mathrm{III}}}
\newcommand{\IV}{{\mathrm{IV}}}

%

%

%

%

\DeclareMathOperator{\grad}{grad}
\newcommand{\Borel}{BH}
\newcommand{\copt}{{\mathbb H}}
%

%
\providecommand*{\donothing}[1]{}

\begin{document}

\title{An Algebraic $\GG$-equivariant B\'{e}zout's theorem}
\author{Steven R. Costenoble}
\address{Steven R. Costenoble, Department of Mathematics\\Hofstra University\\
   Hempstead, NY 11549}
\email{Steven.R.Costenoble@Hofstra.edu}
\author{Thomas Hudson}
\address{Thomas Hudson, College of Transdisciplinary Studies, DGIST, 
Daegu, 42988, Republic of Korea}
\email{hudson@digst.ac.kr}
\author{Sean Tilson}
\address{Sean Tilson}
\email{Sean.Tilson@gmail.com}

\begin{abstract}
B\'ezout's theorem, nonequivariantly, can be interpreted as a calculation of the Euler class
of a sum of line bundles over complex projective space,
expressing it in terms of the rank of the bundle and its degree.
We give here a generalization to the $C_2$-equivariant context, 
using the calculation of the cohomology of a $C_2$-complex projective space
from an earlier paper.
We use ordinary $\GG$-cohomology with Burnside ring coefficients and an extended grading
necessary to define the Euler class, which
we express in terms of the equivariant rank of the bundle
and the degrees of the bundle and its fixed subbundles.
We do similar calculations using constant $\Z$ coefficients and
Borel cohomology and compare the results.
\end{abstract}

\keywords{Equivariant cohomology, Equivariant characteristic classes, Projective space, B\'ezout's theorem}
\makeatletter
\@namedef{subjclassname@2020}{%
  \textup{2020} Mathematics Subject Classification}
\makeatother
\subjclass[2020]{Primary: 55N91;
Secondary: 14N10, 14N15, 55R40, 55R91}

\maketitle

\tableofcontents
\section*{Introduction}

Suppose that we have
$n$ nonzero homogeneous polynomials $f_i$, $1\leq i \leq n$, in $N$ variables, where $n < N$,
and let $d_i$ be the degree of $f_i$.
If $\PP^{N-1}$ is the complex projective space,
we can consider each $f_i$ as giving a section of the complex line bundle $O(d_i)$,
the $d_i$-fold tensor power of the dual of the tautological line bundle over $\PP^{N-1}$.
Each $f_i$ determines a hypersurface $H_i \subset \PP^{N-1}$, its zero locus.
In this context, the (nonequivariant) B\'ezout theorem, as given in \cite{IntersectionFulton},
for example, can be stated in several ways.
One is a purely algebraic statement: In the cohomology ring 
\[
    H^*(\PP^{N-1};\Z) \iso \Z[ \,\widehat c\,]/\langle \,\widehat c^{\;N}\rangle,
\]
we have that the Euler class of $F = O(d_1)\dirsum O(d_2)\dirsum\cdots\dirsum O(d_n)$ is
\[
    e(F) = \Delta\,\widehat c^{\;n}\,,
\]
where $\Delta = d_1 d_2 \cdots d_n$ and $\widehat c = e(O(1))$.
As a consequence, $e(F)$ determines and is completely determined by the rank $n$ of $F$
(that is, the complex dimension of each of its fibers)
and its degree $\Delta$.

A similar statement is true with the Chow ring in place of ordinary cohomology
(they are isomorphic in this case),
which gives us the geometric aspect of B\'ezout's theorem:
Generically, the intersection of the hypersurfaces $H_i$, counted with multiplicities, is
rationally equivalent to $\Delta$ copies of $\PP^{N-n-1}$.
In the classical case, when $n = N-1$, the hypersurfaces intersect in $\Delta$ points.

In \cite{CHTFiniteProjSpace}, we began to examine how this generalizes in the presence
of an action of the two-element group $\GG$.
Let $\C$ denote the trivial complex representation of $\GG$ and let
$\C^\sigma$ denote the nontrivial representation.
If $p\geq 0$ and $q\geq 0$ are integers, 
let $\Cpq pq$ be the sum of $p$ copies of $\C$ and $q$ copies of $\C^\sigma$, and let
$\Xpq pq$ be its (complex) projective space, a $\GG$-space.
Using the equivariant ordinary cohomology with extended grading defined in \cite{CostenobleWanerBook},
we computed the cohomology of $\Xpq pq$ in \cite{CHTFiniteProjSpace}
with Burnside ring coefficients.
We also gave the zero-dimensional version of an equivariant B\'ezout theorem,
showing that the equivariant Euler class in equivariant ordinary cohomology allows us to
determine the finite $\GG$-set in $\Xpq pq$ given by the intersection of $p+q-1$
equivariant hypersurfaces.

Our goal in this paper is to generalize the algebraic part of this $\GG$-B\'ezout theorem
to higher dimensions.
In a followup paper we will discuss the geometric meaning of the Euler class that we calculate here.

Let us set up the context more precisely.
If $F$ is a nonequivariant vector bundle over $\PP^N$, its Euler class has the form
$e(F) = \Delta \cd[n]$, where $n$ is the rank of $F$ and we set $\Delta = 0$ if $n \geq N$.
We call $\Delta$ the {\em degree} of $F$.
By the nonequivariant B\'ezout theorem, if $F$ is a sum of line bundles of the form $O(d)$ and $n < N$,
then $\Delta$ is the product of the degrees $d$.

Now suppose that we have $n < p+q$ $\GG$-line bundles over $\Xpq pq$ with direct sum $F$.
We let $\Delta$ be the nonequivariant degree of $F$.
We can also consider the fixed-set bundle $F^\GG$ over
$\Xpq pq^\GG = \Xp p \disjunion \Xq q$.
Let $n_0$ denote the rank of the restriction of $F^\GG$ to $\Xp p$ and let $\Delta_0$ be its degree.
We know that $n_0 \leq n$, and,
to keep the situation geometrically meaningful,
we would like the generic intersection of the corresponding hypersurfaces in $\Xp p$ to have dimension
no more than the dimension of the intersection of all the hypersurfaces in $\Xpq pq$.
For that, we require that $p-n_0-1 \leq p+q-n-1$, that is,
$n_0 \geq n - q$.
Similarly, let $n_1$ denote the rank of $F^\GG$ over $\Xq q$ and let $\Delta_1$ be its degree;
we require that $n_1 \geq n - p$. We record these notations and conditions for later reference.

\begin{context}\label{context}
$F$ is the sum of $n$ $\GG$-line bundles over $\Xpq pq$
and $\Delta$ is its nonequivariant degree.
The restriction of $F^\GG$ to $\Xp p$ has rank $n_0$ and degree $\Delta_0$,
while its restriction to $\Xq q$ has rank $n_1$ and degree $\Delta_1$.
We assume that
\begin{align*}
    n &< p + q \\
    n - q &\leq n_0 \leq n \\
    \mathllap{\text{and}\qquad} n - p &\leq n_1 \leq n.
\end{align*}
We call the triple $(\Delta,\Delta_0,\Delta_1)$ the
{\em $\GG$-degrees} of $F$.
\end{context}



\begin{bezoutOne}
In the context above, the Euler class $e(F)$ is completely determined by
the ranks $(n,n_0,n_1)$ and the degrees $(\Delta,\Delta_0,\Delta_1)$.
Moreover, these ranks and degrees can be recovered from $e(F)$.
The ranks are additive and the degrees are multiplicative.
\end{bezoutOne}

This will be proved as Theorem~\ref{thm:bezoutone}.
When we say that the degrees are multiplicative, we really mean the following:
Suppose that we have two such bundles $F$ and $F'$, with ranks $(n,n_0,n_1)$ and $(n',n'_0,n'_1)$, respectively,
and corresponding degrees. We assume that $F\dirsum F'$ still satisfies the conditions of the B\'ezout context
above. This allows the possibility that $n_0 + n'_0 \geq p$, in which case the
corresponding degree is not $\Delta_0\Delta'_0$ but 0, and similarly if $n_1 + n'_1 \geq q$.

The nonequivariant B\'ezout theorem can also be viewed as expressing $e(F)$ in terms of the basis
of the cohomology of $\PP^{N-1}$ given by the powers of $\cd$. In any given grading, there is at most
one such power, so there is only one coefficient to specify, which turns out to be the degree $\Delta$.
Equivariantly, the result is more complicated.
The cohomology of $\Xpq pq$ is free over the $RO(\GG)$-graded equivariant cohomology of a point,
but the cohomology of a point is no longer concentrated in grading 0.
As shown in \cite{CHTFiniteProjSpace},
in any given grading of the cohomology of $\Xpq pq$, there are up to $p+q$ basis elements
that can contribute, so an element potentially requires a $(p+q)$-tuple of coefficients to specify.
Our second main result is summarized as follow.

\begin{bezoutTwo}
In the context above, the Euler class $e(F)$ is the linear combination of at most three basis elements.
\end{bezoutTwo}

This is proved as Theorem~\ref{thm:bezouttwo}, which also gives the details as to which three
basis elements are involved and what their coefficients are.
The three basis elements are determined by ($p$ and $q$ and) the ranks $(n,n_0,n_1)$.
The coefficients are determined by the degrees $(\Delta,\Delta_0,\Delta_1)$, but
are not simply equal to them.

This paper is structured as follows.
In Section~\ref{sec:cohomology}, we review the cohomology of $\Xpq pq$ as computed in \cite{CHTFiniteProjSpace},
including our preferred basis.
In Section~\ref{sec:ideal} we give the main results, proving the two theorems above.
There are two other equivariant ordinary cohomology theories in common use,
cohomology with constant $\Z$ coefficients and Borel cohomology.
In Section~\ref{sec:Z} we discuss how the computation changes if we use constant $\Z$ coefficients rather than Burnside ring coefficients,
and in Section~\ref{sec:Borel} we discuss the similar computation in Borel cohomology.
There are maps from cohomology with Burnside ring coefficients to cohomology
with constant $\ZZ$ coefficients, and from that theory to Borel cohomology, both respecting Euler classes, 
and we will see that
the Euler classes in the latter two theories carry less information than
the Euler class in cohomology with Burnside ring coefficients.
In particular, we cannot recover the degrees $\Delta_0$ and $\Delta_1$ from
the Euler class in cohomology with constant $\Z$ coefficients or the class in Borel cohomology.

\vspace{0.2 cm}

 \textit{Acknowledgements:} The second and third author were partially supported by the DFG through the SPP 1786: \textit{Homotopy theory and Algebraic Geometry}, Project number 405468058: \textit{$C_2$-equivariant Schubert calculus of homogeneous spaces}.
The research was conducted in the framework of the research training group
\emph{GRK 2240: Algebro-Geometric Methods in Algebra, Arithmetic and Topology},
which is funded by the DFG, while the second and third authors were affiliated to the Bergische Universit\"{a}t Wuppertal.

\section{The cohomology of $\Xpq pq$}\label{sec:cohomology}

\subsection{Ordinary cohomology}
We will use $\GG$-equivariant ordinary cohomology with the extended grading developed in \cite{CostenobleWanerBook}.
This is an extension of Bredon's ordinary cohomology to be graded on representations of the fundamental groupoids
of $\GG$-spaces.
We review here some of the notation and computations we will be using.
A more detailed description of this theory can be found in \cite{CHTFiniteProjSpace}.

For an ex-$\GG$-space $Y$ over $X$, we write $H_\GG^{RO(\Pi X)}(Y;\Mackey T)$ for the
ordinary cohomology of $Y$ with coefficients in a Mackey functor $\Mackey T$, graded
on $RO(\Pi X)$, the representation ring of the fundamental groupoid of $X$.
Through most of this paper we will use the Burnside ring Mackey functor $\Mackey A$ as the coefficients,
and write simply $H_\GG^{RO(\Pi X)}(Y)$.

In \cite{CostenobleWanerBook} and \cite{CHTFiniteProjSpace} we considered cohomology to be Mackey functor--valued,
which is useful for many computations, and wrote $\Mackey H_\GG^{RO(\Pi X)}(Y)$ for the resulting theory.
In this paper we will be concentrating on the values at level $\GG/\GG$, and write
$H_\GG^{RO(\Pi X)}(Y) = \Mackey H_\GG^{RO(\Pi X)}(Y)(\GG/\GG)$. However, we will still refer to the
restriction functor $\rho$ from equivariant cohomology to nonequivariant cohomology, and the transfer
map $\tau$ going in the other direction.

For all $X$ and $Y$, $H_\GG^{RO(\Pi X)}(Y)$ is a graded module over 
\[
    \copt = \copt^{RO(\GG)} = H_\GG^{RO(\GG)}(S^0),
\]
the cohomology of a point.
The grading on the latter is just $RO(\GG)$, the real representation ring of $\GG$, which is free abelian
on $1$, the class of the trivial representation $\R$, and $\sigma$, the class of the
sign representation $\R^\sigma$. The cohomology of a point was calculated by Stong in an unpublished
manuscript and first published by Lewis in \cite{LewisCP}. We can picture the calculation as 
in Figure~\ref{fig:cohompt},
in which a group in grading $a+b\sigma$ is plotted at the point $(a,b)$, and the spacing of the grid lines
is 2 (which is more conventient for other graphs we will give).
The box at the origin is a copy of $A(\GG)$, the Burnside ring of $\GG$,
closed circles are copies of $\Z$, and open circles are copies of $\Z/2$.
\begin{figure}
\begin{tikzpicture}[x=4mm, y=4mm]
	\draw[step=2, gray, very thin] (-7.8, -7.8) grid (7.8, 7.8);
	\draw[thick] (-8, 0) -- (8, 0);
	\draw[thick] (0, -8) -- (0, 8);
    \node[right] at (8,0) {$a$};
    \node[above] at (0,8) {$b\sigma$};

    \fill (-0.3, -0.3) rectangle (0.3, 0.3);
    \fill (0, -7) circle(0.2);
    \fill (0, -6) circle(0.2);
    \fill (0, -5) circle(0.2);
    \fill (0, -4) circle(0.2);
    \fill (0, -3) circle(0.2);
    \fill (0, -2) circle(0.2);
    \fill (0, -1) circle(0.2);
    \fill (0, 1) circle(0.2);
    \fill (0, 2) circle(0.2);
    \fill (0, 3) circle(0.2);
    \fill (0, 4) circle(0.2);
    \fill (0, 5) circle(0.2);
    \fill (0, 6) circle(0.2);
    \fill (0, 7) circle(0.2);

    \fill (-2, 2) circle(0.2);
    \fill (-4, 4) circle(0.2);
    \fill (-6, 6) circle(0.2);

    \draw[fill=white] (-2, 3) circle(0.2);
    \draw[fill=white] (-2, 4) circle(0.2);
    \draw[fill=white] (-2, 5) circle(0.2);
    \draw[fill=white] (-2, 6) circle(0.2);
    \draw[fill=white] (-2, 7) circle(0.2);
    \draw[fill=white] (-4, 5) circle(0.2);
    \draw[fill=white] (-4, 6) circle(0.2);
    \draw[fill=white] (-4, 7) circle(0.2);
    \draw[fill=white] (-6, 7) circle(0.2);

    \fill (2, -2) circle (0.2);
    \fill (4, -4) circle (0.2);
    \fill (6, -6) circle (0.2);
    \draw[fill=white] (3, -3) circle(0.2);
    \draw[fill=white] (5, -5) circle(0.2);
    \draw[fill=white] (7, -7) circle(0.2);
    
    \draw[fill=white] (3, -4) circle(0.2);
    \draw[fill=white] (3, -5) circle(0.2);
    \draw[fill=white] (3, -6) circle(0.2);
    \draw[fill=white] (3, -7) circle(0.2);
    \draw[fill=white] (5, -6) circle(0.2);
    \draw[fill=white] (5, -7) circle(0.2);

    \node[right] at (0,1) {$e$};
    \node[below left] at (-2,2) {$\xi$};
    \node[above right] at (2,-2) {$\tau(\iota^{-2})$};
    \node[left] at (0,-1) {$e^{-1}\kappa$};

\end{tikzpicture}
\caption{$\copt^{RO(\GG)}$}\label{fig:cohompt}
\end{figure}

Recall that $A(\GG)$ is the Grothendieck group of finite $\GG$-sets, with multiplication given by products of sets.
Additively, it is free abelian on the classes of the orbits of $\GG$, for which we will write
$1 = [\GG/\GG]$ and $g = [\GG/e]$. The multiplication is given by $g^2 = 2g$.
We will also write $\kappa = 2 - g$. Other important elements are shown in the figure:
The group in degree $\sigma$ is generated by an element $e$,
while the group in degree $-2 + 2\sigma$ is generated by an element $\xi$.
The groups in the second quadrant are generated by the products $e^m\xi^n$, with $2e\xi = 0$.
We have $g\xi = 2\xi$ and $ge = 0$.
The groups in gradings $-m\sigma$, $m\geq 1$, are generated by elements $e^{-m}\kappa$, so named
because $e^m\cdot e^{-m}\kappa = \kappa$. We also have $ge^{-m}\kappa = 0$.

To explain $\tau(\iota^{-2})$, we think for moment about the nonequivariant cohomology
of a point. If we grade it on $RO(\GG)$, we get
$H^{RO(\GG)}(S^0;\Z) \iso \Z[\iota^{\pm 1}]$, where $\deg \iota = -1 + \sigma$.
(Nonequivariantly, we cannot tell the difference between $\R$ and $\R^\sigma$.)
We have $\rho(\xi) = \iota^2$ and $\tau(\iota^2) = g\xi = 2\xi$.
Note also that $\tau(1) = g$.
In the fourth quadrant we have that the group in grading $n(1-\sigma)$, $n\geq 2$, is
generated by $\tau(\iota^{-n})$.
The remaining groups in the fourth quadrant will not concern us here.
For more details, see \cite{Co:BGU1preprint} or \cite{CHTFiniteProjSpace}.

\subsection{The cohomology of projective space}
As described in the introduction, the form of B\'ezout's theorem we shall give expresses
the Euler class of a bundle over $\Xpq pq$ in terms of a basis of its cohomology.
we now review the structure of that cohomology as calculated in \cite{CHTFiniteProjSpace}.

Write $B = \Xpq \infty\infty$.
Its fixed set is
\[
    B^\GG = \Xp p \disjunion \Xq q = B^0 \disjunion B^1,
\]
where we use the indices 0 and 1 to evoke the trivial and nontrivial representation
of $\GG$, respectively.
(We will use this convention throughout, that a subscript 0 refers to something related to
$B^0$ and subscript 1 refers to something related to $B^1$.)
Representations of $\Pi B$ are determined by their restrictions to $B^0$ and $B^1$,
which are elements of $RO(\GG)$ that must have the same nonequivariant rank and
the same parity for the ranks of their fixed point representations.
As a result, we can write
\[
    RO(\Pi B) = \Z\{1,\sigma,\Omega_0,\Omega_1\}/\langle \Omega_0 + \Omega_1 = 2\sigma - 2\rangle,
    \]
where $\Omega_0$ is the representation whose value on $B^0$ is $2\sigma - 2$  and
on $B^1$ is $0$;
while $\Omega_1$ is the representation whose value on $B^0$ is $0$ and
on $B^1$ is $2\sigma-2$.
For any $\alpha\in RO(\Pi B)$, write $|\alpha|\in\ZZ$ for its underlying nonequivariant rank,
and $\alpha_0$ and $\alpha_1 \in RO(\GG)$ for its retrictions to $B^0$ and $B^1$, respectively.
What we said above can be phrased as: $\alpha$ is completely determined by the triple of ranks
$(|\alpha|,|\alpha_0^\GG|,|\alpha_1^\GG|)$, where the latter two ranks have the same parity.

We think of the finite projective spaces as spaces over $B$ by the evident inclusions
$\Xpq pq \to \Xpq\infty\infty$, so will grade their cohomologies on $RO(\Pi B)$.
Let $\omega$ denote the tautological line bundle over $B$, let $\omega\dual$ be its dual bundle, let
$\chi\omega = \omega\tensor_\C \C^\sigma$, and let $\chi\omega\dual$ be the dual of $\chi\omega$.
We will also use the notation from algebraic geometry in which $\omega = O(-1)$ and $\omega\dual = O(1)$;
we write $\chi O(-1) = \chi\omega$ and $\chi O(1) = \chi\omega\dual$.

Associated to any bundle over $B$ is a representation in $RO(\Pi B)$ that we think of as the
equivariant rank of the bundle; this representation is given by the fiber representations
over $B^0$ and $B^1$. In the case of $\omega$ and $\chi\omega$, we have
\begin{align*}
    \omega &= 2 + \Omega_1 \\
    \chi\omega &= 2 + \Omega_0,
\end{align*}
where we write $\omega$ and $\chi\omega$ again for the associated elements of $RO(\Pi B)$.

Let $\cwd$ and $\cxwd$ denote the Euler classes of $\omega\dual$ and $\chi\omega\dual$, respectively.
The cohomology of $\Xpq\infty\infty$ was calculated in \cite{Co:BGU1preprint} as follows.

\begin{theorem}
$H_{\GG}^{RO(\Pi B)}(B_+)$ is an algebra over $\copt$ generated by the 
Euler classes $\cwd$ and $\cxwd$ together with classes $\cxwt$ and $\cwt$. 
These elements live in gradings
\begin{alignat*}{4}
 \grad\cwd &= \omega  \qquad&  \grad\cxwd &= \chiw  \\
 \grad\cwt &= \omega - 2  \qquad& \grad\cxwt &= \chiw-2 
\end{alignat*}
They satisfy the relations
\begin{align*}
		\cxwt \cwt &= \xi \\
        \mathllap{\text{and}\quad}\cwt \cxwd &= (1-\kappa)\cxwt \cwd + e^2,
\end{align*}
and these relations completely determine the algebra.
Moreover, $H_{\GG}^{RO(\Pi B)}(B_+)$ is free as a module over $\copt$.
\qed
\end{theorem}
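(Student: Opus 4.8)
The plan is to realize $B=\Xpq{\infty}{\infty}$ as the colimit of the finite projective spaces $\Xpq{p}{q}$, to compute $H_{\GG}^{RO(\Pi B)}(\Xpq{p}{q}_+)$ by a double induction on $p$ and $q$ using the two families of Gysin sequences obtained by deleting a hyperplane, and then to pass to the colimit. Throughout, the organizing tools are the restriction $\rho$ to the underlying nonequivariant space, where $\PP(\C^\infty)$ has cohomology $\Z[\cd]$, and the restriction to the fixed set $B^\GG=\Xp{\infty}\disjunion\Xq{\infty}$, two copies of $\CP^\infty$; together these detect classes and will characterize the secondary generators $\cwt$ and $\cxwt$.

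The hyperplane obtained by omitting one trivial summand, $\Xpq{p-1}{q}\includesin\Xpq{p}{q}$, has normal bundle $\omega\dual$, while the one omitting a sign summand, $\Xpq{p}{q-1}\includesin\Xpq{p}{q}$, has normal bundle $\omega\dual\tensor\C^\sigma=\chi\omega\dual$. The associated Thom--Gysin long exact sequences therefore shift gradings by $\omega=2+\Omega_1$ and $\chiw=2+\Omega_0$ respectively, and their connecting maps are, via the Thom isomorphism, multiplication by the Euler classes $\cwd$ and $\cxwd$. This is exactly what forces the generators into the stated $RO(\Pi B)$-gradings and explains why the Euler classes are the natural building blocks.

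Nonequivariantly these Gysin sequences split and produce the polynomial algebra $\Z[\cd]$; equivariantly they do not, because multiplication by $\cwd$ and $\cxwd$ on a free $\copt$-module has nonzero kernel and cokernel arising from the $e$-torsion and the elements $\kappa,\xi$ visible in Figure~\ref{fig:cohompt}. Analyzing these maps with the explicit structure of $\copt$ in hand yields one new free $\copt$-summand at each stage, and in the limit the cokernel classes assemble into the secondary generators $\cwt$ and $\cxwt$, in gradings $\Omega_1$ and $\Omega_0$, which I would pin down by their restrictions to the underlying space and to $B^\GG$ (for instance $\rho(\cwd)=\cd$). This simultaneously establishes that $H_{\GG}^{RO(\Pi B)}(B_+)$ is free over $\copt$ and exhibits a basis. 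The relations are then verified directly: since both sides of $\cxwt\cwt=\xi$ lie in the single grading $\Omega_0+\Omega_1=-2+2\sigma$ and both sides of $\cwt\cxwd=(1-\kappa)\cxwt\cwd+e^2$ lie in grading $2\sigma$, it suffices to compare $\copt$-coordinates, which I would do in the first nontrivial stage $\Xpq{1}{1}\homeo S^{2\sigma}$ (with $\Xpq{2}{1}$ and $\Xpq{1}{2}$ as needed) and propagate by freeness.

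Finally I would show the relations are complete. Let $R$ be the $\copt$-algebra presented by $\cwd,\cxwd,\cwt,\cxwt$ modulo the two relations. Rewriting with $\cxwt\cwt=\xi$ and $\cwt\cxwd=(1-\kappa)\cxwt\cwd+e^2$ reduces every monomial to a normal form, and I would check that the normal forms give a $\copt$-basis of $R$ whose rank in each $RO(\Pi B)$-grading matches the count produced by the Gysin induction. The tautological map $R\to H_{\GG}^{RO(\Pi B)}(B_+)$ is surjective by construction and, being a map of free $\copt$-modules of equal finite rank in each grading, is therefore an isomorphism. I expect the main obstacle to be precisely this control of the $\copt$-module structure: because $\copt$ is far from a field---it carries the Burnside ring $A(\GG)$ at the origin together with the $\Z$'s, $\Z/2$'s, and zero-divisors $e$, $g$, $\kappa$ of Figure~\ref{fig:cohompt}---one must track the kernels and cokernels of multiplication by the Euler classes gradingwise and confirm that the two relations neither conceal an extra relation nor overcollapse the module, so that $R$ is genuinely free of the correct rank.
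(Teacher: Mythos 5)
You should know at the outset that the paper does not prove this theorem at all: it is quoted from \cite{Co:BGU1preprint} and stated with no proof (that is why the statement carries a terminal box), and even the finite-space computation of \cite{CHTFiniteProjSpace} is phrased in terms of classes pulled back from $B$, i.e.\ it takes this result as input. So your proposal cannot be compared to an argument in the text; it can only be judged on its own terms, against the kind of computation carried out in the cited work. On those terms the skeleton is sensible, and several details are right: the normal bundles of the two hyperplane inclusions are indeed $\omega\dual$ and $\chi\omega\dual$, the grading shifts are $\omega$ and $\chiw$, $\Xpq{1}{1}\homeo S^{2\sigma}$, and your endgame (a surjection of free graded $\copt$-modules carrying a spanning set onto a basis is an isomorphism) is a valid way to establish completeness of the relations once freeness and a basis are in hand.

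The genuine gaps sit exactly at the two places where all the work in such a computation lives. First, your inductive engine --- ``analyzing these maps with the explicit structure of $\copt$ in hand yields one new free $\copt$-summand at each stage'' --- is an assertion, not an argument. Nonequivariantly the Gysin sequences split by the even/odd degree argument; equivariantly no parity argument is available, because $\copt$ is nonzero in infinitely many gradings of both parities (Figure~\ref{fig:cohompt}), so vanishing of the connecting maps must be proved by explicitly constructing classes on $\Xpq{p}{q}$ restricting to generators on the smaller space. That is precisely where $\cwt$, $\cxwt$, and the divided classes $\cxwt^{-k}\cwd^{\;p}$, $\cwt^{-k}\cxwd^{\;q}$ of Theorem~\ref{thm:cohomStructure} come from; they are not Euler classes, no Thom isomorphism produces them, and ``the cokernel classes assemble into the secondary generators'' names this problem rather than solving it (note also that for finite $p,q$ these classes must be built compatibly in the induction, since you cannot, as the paper does, define them by restriction from the not-yet-computed $B$). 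Second, your detection principle is too weak: $\rho$ and $(-)^\GG$ are \emph{not} jointly injective on $\copt$ --- the classes $e^a\xi^b$ with $a,b\geq 1$ vanish under both, which is exactly the caveat the paper records immediately after Proposition~\ref{prop:restrictionsdetermine}. Hence neither pinning down $\cwt,\cxwt$ by those two restrictions nor verifying $\cwt\cxwd=(1-\kappa)\cxwt\cwd+e^2$ by comparing them is legitimate until you already know the $\copt$-module structure of the relevant gradings well enough to rule out such torsion, and know that restriction to $\Xpq{1}{1}$, $\Xpq{2}{1}$, $\Xpq{1}{2}$ is injective there; as written, this is circular. Finally, the passage to $B$ needs the Milnor $\lim^1$ argument and compatibility of bases to see that the inverse limit is again free --- routine once the restriction maps are known to be surjective, but that surjectivity is again the content of the first gap.
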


Pulling back along the inclusion $\Xpq pq \includesin \Xpq\infty\infty$,
the cohomology of $\Xpq pq$ contains elements
we will also call $\cwd$, $\cxwd$, $\cxwt$, and $\cwt$.
In \cite{CHTFiniteProjSpace}, we showed the following.

\begin{theorem}[{\cite[Theorem A]{CHTFiniteProjSpace}}]\label{thm:cohomStructure}
Let $0 \leq p < \infty$ and $0 \leq q < \infty$ with $p+q > 0$.
Then $H_{\GG}^{RO(\Pi B)}(\Xpq{p}{q}_+)$ is a free module over $\copt$.
As a (graded) commutative algebra over $\copt$, the ring
$H_{\GG}^{RO(\Pi B)}(\Xpq{p}{q}_+)$ is generated by $\cwd$, $\cxwd$, $\cxwt$, and $\cwt$,
together with the following classes:
$\cwd^{\;p}$ is infinitely divisible by $\cxwt$, meaning that, for $k\geq 1$,
there are unique elements $\cxwt^{-k}\cwd^{\;p}$ such that
\[
 \cxwt^k \cdot \cxwt^{-k} \cwd^{\;p} = \cwd^{\;p}.
\]
Similarly, $\cxwd^{\;q}$ is infinitely divisible by $\cwt$, meaning that, for $k\geq 1$,
there are unique elements $\cwt^{-k}\cxwd^{\;q}$ such that
\[
 \cwt^k \cdot \cwt^{-k} \cxwd^{\;q} = \cxwd^{\;q}.
\]
The generators satisfy the following further relations:
\begin{align*}
	\cxwt \cwt &= \xi, \\
	\cwt \cxwd &= (1-\kappa)\cxwt \cwd + e^2 \\
    \mathllap{\text{and}\qquad} \cwd^{\;p} \cxwd^{\;q} &= 0.
\end{align*}
\qed
\end{theorem}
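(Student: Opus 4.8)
The plan is to leverage the computation of $H_\GG^{RO(\Pi B)}(B_+)$ for $B = \Xpq\infty\infty$ recalled just above, and to descend to the finite spaces by adding coordinate lines one at a time. Because the inclusion $\Xpq{p}{q}\includesin B$ is a map of spaces over $B$, the restriction $\rho\colon H_\GG^{RO(\Pi B)}(B_+)\to H_\GG^{RO(\Pi B)}(\Xpq{p}{q}_+)$ is a homomorphism of $\copt$-algebras carrying the generators $\cwd,\cxwd,\cxwt,\cwt$ to the like-named classes; consequently the relations $\cxwt\cwt = \xi$ and $\cwt\cxwd = (1-\kappa)\cxwt\cwd + e^2$ are inherited for free. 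The genuinely new content in the finite case is therefore threefold: the top relation $\cwd^{\;p}\cxwd^{\;q}=0$, the existence of the infinitely divisible classes, and freeness together with the assertion that the listed relations are complete.

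The relation $\cwd^{\;p}\cxwd^{\;q}=0$ I would prove directly. Writing $\Cpq pq = \C^p\dirsum\Cq q$ and using $\omega\dual\tensor\C\iso\omega\dual$ and $\omega\dual\tensor\C^\sigma\iso\chi\omega\dual$, one gets
\[
    \Hom(\omega,\Cpq pq)\iso(\omega\dual)^{\dirsum p}\dirsum(\chi\omega\dual)^{\dirsum q},
\]
so that $\cwd^{\;p}\cxwd^{\;q}$ is the Euler class of $\Hom(\omega,\Cpq pq)$. The tautological inclusion $\omega\includesin\Cpq pq$ is a nowhere-vanishing section of this bundle, and an equivariant bundle with a nowhere-vanishing section has vanishing Euler class.

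For the module structure the main tool is a pair of Gysin sequences obtained by deleting a line. Realizing $\Xpq{p-1}{q}$ as the zero locus in $\Xpq{p}{q}$ of the section of $\omega\dual$ given by projecting $\omega$ onto the last trivial coordinate, and $\Xpq{p}{q-1}$ as the zero locus of the analogous section of $\chi\omega\dual$, the Thom isomorphism available in the Costenoble--Waner extended grading yields long exact sequences in which the restriction maps to $H_\GG^{RO(\Pi B)}(\Xpq{p-1}{q}_+)$ and to $H_\GG^{RO(\Pi B)}(\Xpq{p}{q-1}_+)$ interleave with multiplication by $\cwd$ and by $\cxwd$, respectively. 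I would then induct on $p$ and $q$, first treating the spaces $\Xpq{p}{0}$ and $\Xpq{0}{q}$ (where the action is trivial and the computation is correspondingly simpler, with only one of the two divisibility families present), and then handling the general case by adding lines.

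The crux, and the step I expect to be the main obstacle, is the inductive step: showing that the Gysin sequences split as $\copt$-modules and reading off simultaneously the infinitely divisible classes and the freeness. Since $\cxwt\cwt=\xi$ makes $\cxwt$ and $\cwt$ zero divisors, and $\copt$ carries the large negative cone spanned by the classes $e^{-m}\kappa$ and $\tau(\iota^{-n})$, multiplication by $\cwd$ and $\cxwd$ ceases to be injective once the top relation $\cwd^{\;p}\cxwd^{\;q}=0$ is in force. The classes $\cxwt^{-k}\cwd^{\;p}$ and $\cwt^{-k}\cxwd^{\;q}$ are precisely the elements these sequences force to exist in the negative-cone gradings, and the uniqueness that makes the divisibility well defined amounts to $\cxwt$, respectively $\cwt$, acting injectively there. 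The delicate part is the bookkeeping of which summands of $\copt$ occur in each grading: one must verify that the assembled module has no unexpected torsion, is free on the proposed basis, and that no relations beyond the three displayed are needed. A clean way to close the argument is to compare, grading by grading, the $\copt$-rank produced by the split sequences against the rank predicted by the claimed basis, concluding equality.
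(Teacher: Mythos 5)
The first thing to note is that this paper does not prove Theorem~\ref{thm:cohomStructure} at all: it is recalled, with a citation in place of a proof, from \cite{CHTFiniteProjSpace} (Theorem A there), so the comparison has to be with that paper's argument, whose shape is visible in the basis construction recalled in Section~\ref{sec:cohomology}. Measured against that, your plan is essentially the same architecture, and its concrete steps are sound: the nowhere-vanishing-section proof of $\cwd^{\;p}\cxwd^{\;q}=0$ (the tautological inclusion $\omega\includesin\Cpq pq$ viewed as a section of $\Hom(\omega,\Cpq pq)\iso(\omega\dual)^{\dirsum p}\dirsum(\chi\omega\dual)^{\dirsum q}$) is the standard and correct one, and your two Gysin sequences --- whose third terms are copies of $\copt$ because the complement of each deleted projective subspace is an equivariantly contractible representation --- are exactly the sequences underlying the recursive bases $F_{p,q}(m)=\{\cwt^m\}\union i_!F_{p-1,q}(m-1)$ for $m\geq 0$ and $F_{p,q}(m)=\{\cxwt^{|m|}\}\union j_!F_{p,q-1}(m+1)$ for $m<0$. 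What your sketch defers, however, is the actual engine of the induction: the long exact sequences split not via a generic rank count but because the evaluation map to $\copt$ hits $1$ --- one has $\ev(\cwt^m)=1$ at the added fixed point when $m\geq 0$ (since $\cwt^\GG=(1,0)$), whereas for $m<0$ the class $\cwt^m$ does not exist and one must switch to the other cofibration and use $\cxwt^{|m|}$. This forced alternation between the two sequences according to the sign of the twist $m$ (visible in the case split defining $F_{p,q}(m)$) is what simultaneously yields freeness, the existence and uniqueness of the divisible classes $\cxwt^{-k}\cwd^{\;p}$ and $\cwt^{-k}\cxwd^{\;q}$ in the negative twists, and the completeness of the three relations. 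So: right architecture, correct treatment of the top relation, but the step you yourself flag as ``the main obstacle'' is precisely where the cited proof does its work, and a grading-by-grading rank comparison alone would not substitute for exhibiting the splittings.
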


We also
gave an explicit basis for $H_\GG^{RO(\Pi B)}(\Xpq pq_+)$ over $\copt$,
which we can describe as follows. 
We define sets $F_{p,q}(m)$, recursively on $p$ and $q$, that give bases for $H_\GG^{m\omega+RO(\GG)}(\Xpq pq_+)$.
For $m\in\Z$, let
\begin{align*}
 F_{p,0}(m) &:= \{ \cwt^m, \cwt^{m-1}\cwd, \cwt^{m-2}\cwd^{\;2},\ldots, \cwt^{m-p+1}\cwd^{\;p-1} \} \\
\intertext{and}
 F_{0,q}(m) &:= \{ \cxwt^m, \cxwt^{m-1}\cxwd, \cxwt^{m-2}\cxwd^{\;2}, \ldots, \cxwt^{m-q+1}\cxwd^{\;q-1} \}.
\end{align*}
(Note that $\cwt$ is invertible in the first case and $\cxwt$ is invertible in the second.)
For $p, q > 0$ we then define
\[
 F_{p,q}(m) := 
  \begin{cases}
    \{ \cwt^m \} \union i_! F_{p-1,q}(m-1) & \text{if\ \,$m\geq 0$} \\
    \{ \cxwt^{|m|} \} \union j_! F_{p,q-1}(m+1) & \text{if\,\ $m < 0$,}
  \end{cases}
\]
where $i\colon \Xpq{p-1}q \to \Xpq pq$ and $j\colon \Xpq p{(q-1)} \to \Xpq pq$ are the inclusions.
The pushforward $i_!$ is given algebraically by multiplication by $\cwd$ and $j_!$ is multiplication by $\cxwd$.

It is possible from this description to write down the bases explicitly, but the results are messy, having to be broken
down by cases depending on where $m$ falls in relation to $p$ and $q$;
this is done in \cite[Proposition~4.7]{CHTFiniteProjSpace}.
However, we can make the following general statements.
\begin{enumerate}
    \item For fixed $p$, $q$, and $m$, there are exactly $p+q$ basis elements lying in $H_\GG^{m\omega+RO(\GG)}(\Xpq pq_+)$.
    \item Those basis elements have gradings of the form $m(\omega-2) + 2a_i + 2b_i\sigma$, $0\leq i \leq p+q-1$, where
            $a_i + b_i = i$.
    \item The basis element with grading $m(\omega-2) + 2a + 2b\sigma$ restricts to the nonequivariant class $\widehat c^{\;a+b}$, where
            $\widehat c$ is the first nonequivariant Chern class of $O(1)$.
    \item For a given integer $k$, there are at most two indices $i$ such that $a_i = k$.
\end{enumerate}
The following illustrate, in the case of $\Xpq 45$, how the basis elements can be arranged for various values of $m$.
In each case, the basis element with grading $m(\omega-2) + 2a + 2b\sigma$ is marked by a dot at coordinates $(a,b)$.
\[
\begin{tikzpicture}[scale=0.4]
	\draw[step=1cm, gray, very thin] (-7.8, -0.8) grid (5.8, 7.8);
	\draw[thick] (-8, 0) -- (6, 0);
	\draw[thick] (0, -1) -- (0, 8);

 	\node[below] at (-1, -1) {$m=-6$};

    \fill (-6, 6) circle(5pt);
    \fill (-5, 6) circle(5pt);
    \fill (-4, 6) circle(5pt);
    \fill (-3, 6) circle(5pt);
    \fill (-2, 6) circle(5pt);

    \fill (0, 5) circle(5pt);
    \fill (1, 5) circle(5pt);
    \fill (2, 5) circle(5pt);
    \fill (3, 5) circle(5pt);

\end{tikzpicture}
\hspace{0.5 cm}
\begin{tikzpicture}[scale=0.4]
	\draw[step=1cm, gray, very thin] (-6.8, -0.8) grid (6.8, 7.8);
	\draw[thick] (-7, 0) -- (7, 0);
	\draw[thick] (0, -1) -- (0, 8);

 	\node[below] at (0, -1) {$m=-3$};

    \fill (-3, 3) circle(5pt);
    \fill (-2, 3) circle(5pt);
    \fill (-1, 3) circle(5pt);
    \fill ( 0, 3) circle(5pt);
    \fill ( 0, 4) circle(5pt);

    \fill (1, 4) circle(5pt);
    \fill (1, 5) circle(5pt);
    \fill (2, 5) circle(5pt);
    \fill (3, 5) circle(5pt);

\end{tikzpicture}
\]
\[
\begin{tikzpicture}[scale=0.4]
	\draw[step=1cm, gray, very thin] (-6.8, -0.8) grid (6.8, 5.8);
	\draw[thick] (-7, 0) -- (7, 0);
	\draw[thick] (0, -1) -- (0, 6);

 	\node[below] at (0, -1) {$m=0$};

    \fill (0, 0) circle(5pt);
    \fill (1, 1) circle(5pt);
    \fill (2, 2) circle(5pt);
    \fill (3, 3) circle(5pt);

    \fill (0, 1) circle(5pt);
    \fill (1, 2) circle(5pt);
    \fill (2, 3) circle(5pt);
    \fill (3, 4) circle(5pt);
    \fill (4, 4) circle(5pt);

\end{tikzpicture}
\hspace{0.5 cm}
\begin{tikzpicture}[scale=0.4]
	\draw[step=1cm, gray, very thin] (-5.8, -0.8) grid (7.8, 5.8);
	\draw[thick] (-6, 0) -- (8, 0);
	\draw[thick] (0, -1) -- (0, 6);

 	\node[below] at (1, -1) {$m=2$};

    \fill (0, 0) circle(5pt);
    \fill (1, 0) circle(5pt);
    \fill (2, 1) circle(5pt);
    \fill (3, 2) circle(5pt);

    \fill (2, 0) circle(5pt);
    \fill (3, 1) circle(5pt);
    \fill (4, 2) circle(5pt);
    \fill (5, 2) circle(5pt);
    \fill (6, 2) circle(5pt);

\end{tikzpicture}
\]
\[
\begin{tikzpicture}[scale=0.4]
	\draw[step=1cm, gray, very thin] (-1.8, -3.8) grid (11.8, 2.8);
	\draw[thick] (-2, 0) -- (12, 0);
	\draw[thick] (0, -4) -- (0, 3);

 	\node[below] at (5, -4) {$m=6$};

    \fill (6, -2) circle(5pt);
    \fill (7, -2) circle(5pt);
    \fill (8, -2) circle(5pt);
    \fill (9, -2) circle(5pt);
    \fill (10, -2) circle(5pt);

    \fill (0, 0) circle(5pt);
    \fill (1, 0) circle(5pt);
    \fill (2, 0) circle(5pt);
    \fill (3, 0) circle(5pt);

\end{tikzpicture}
\]
For ease of reference, we will write the bases as
\[
 F_{p,q}(m) = \{ P^{(m)}_{0}, P^{(m)}_{1}, \ldots, P^{(m)}_{p+q-1} \},
\]
where $P^{(m)}_{i}$ is the basis element in $H_\GG^{m\omega+RO(\GG)}(\Xpq pq_+)$ restricting to the element $\widehat c^{\;i}$ nonequivariantly.
When $m$ is understood, we will simply write $P_i$ for $P^{(m)}_i$.
We can also say that
$P_i$ is the basis element in grading $m(\omega-2)+2a+2b\sigma$ with $a+b = i$, as illustrated in the following diagram
with $m=0$.
\[
\begin{tikzpicture}[scale=0.5]
	\draw[step=1cm, gray, very thin] (-1.8, -0.8) grid (5.8, 5.8);
	\draw[thick] (-2, 0) -- (6, 0);
	\draw[thick] (0, -1) -- (0, 6);

    \fill (0, 0) circle(4pt); \node[below right] at (0,0) {$P_0$};
    \fill (1, 1) circle(4pt); \node[below right] at (1,1) {$P_2$};
    \fill (2, 2) circle(4pt); \node[below right] at (2,2) {$P_4$};
    \fill (3, 3) circle(4pt); \node[below right] at (3,3) {$P_6$};

    \fill (0, 1) circle(4pt); \node[above left] at (0,1) {$P_1$};
    \fill (1, 2) circle(4pt); \node[above left] at (1,2) {$P_3$};
    \fill (2, 3) circle(4pt); \node[above left] at (2,3) {$P_5$};
    \fill (3, 4) circle(4pt); \node[above left] at (3,4) {$P_7$};
    \fill (4, 4) circle(4pt); \node[below right] at (4,4) {$P_8$};

\end{tikzpicture}
\]
\begin{definition}
Given any element $x\in H_\GG^{m\omega+RO(\GG)}(\Xpq pq_+)$, we can write $x$ uniquely as
\[
 x = \sum_{i=0}^{p+q-1} \alpha_i P^{(m)}_{i}
\]
with each coefficient $\alpha_i \in \copt$.
We call the $(p+q)$-tuple $(\alpha_i)$ the {\em coefficient vector} of~$x$.
\end{definition}

Because elements of $\copt$ lie in a restricted set of gradings, the number of nonzero coefficients
possible for a given $x$ may be limited, depending on the grading of $x$, though there are elements $x$ for which all coefficients are nonzero.

There are two restriction maps we will use,
\begin{align*}
    \rho\colon H_\GG^{\alpha}(\Xpq pq_+) &\to H^{|\alpha|}(\Xpq pq_+),
\intertext{restriction to nonequivariant cohomology, and}
    (-)^\GG\colon H_\GG^\alpha(\Xpq pq_+) &\to H^{\alpha_0^\GG}(\Xp p_+) \dirsum H^{\alpha_1^\GG}(\Xq q_+),
\end{align*}
the fixed-point map. These are ring maps and their values on the multiplicative generators are given by the following.
\begin{align*}
    \rho(\cxwt) &= 1 & \rho(\cwt) &= 1 \\
    \rho(\cwd) &= \cd & \rho(\cxwd) &= \cd \\
    \cxwt^\GG &= (0,1) & \cwt^\GG &= (1,0) \\
    \cwd^{\;\GG} &= (\cd, 1) & \cxwd^{\;\GG} &= (1, \cd)
\end{align*}
We also need the values of the similar restriction maps
\begin{align*}
    \rho\colon \copt^\alpha &\to H^{|\alpha|}(S^0), \\
    (-)^\GG\colon \copt^\alpha &\to H^{\alpha^\GG}(S^0).
\end{align*}
The particular values we will need are
\begin{align*}
    \rho(\tau(\iota^{2k})) &= 1 & \rho(e^{-k}\kappa) &= 0 & \rho(e^k) &= 0 \\
    \tau(\iota^{2k})^\GG &= 0 & (e^{-k}\kappa)^\GG &= 2 & (e^k)^\GG &= 1.
\end{align*}

\section{The algebraic equivariant B\'ezout theorem}\label{sec:ideal}


It is possible to take the calculation of Euler classes
in \cite{CHTFiniteProjSpace} and, by brute force, work out their expression
in terms of the basis for the cohomology of $\Xpq pq$ discussed in the preceding section.
Instead, we will take advantage of some features of the cohomology of a point
to give a more conceptual approach that shows better why the calculation works the way it does.

\begin{definition}\ 
\begin{itemize}
\item
Let $T\subset \copt$
consist of the elements
$a\tau(\iota^{2\ell})$ for $a\in\Z$ and $\ell\in\Z$,
$a e^{-m}\kappa$ for $a\in \Z$ and $m\geq 1$,
and the elements $ae^m$ for $a\in\Z$ and $m\geq 1$,
and all of $A(\GG) = \copt^0$.
\item
Let $I_e \subset T$ consist of
the elements $a\tau(\iota^{2\ell})$ for $a\in\Z$ and $\ell\in\Z$,
$a e^{m}\kappa$ for $a\in \Z$ and $m\in \Z$,
and the elements
$a+bg \in A(\GG)$ such that $a$ is even.
\end{itemize}
\end{definition}

Note that $e^m\kappa = 2e^m$ if $m > 0$.

\begin{proposition}
$I_e$ is an ideal of $\copt$.
\end{proposition}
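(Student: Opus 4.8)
The plan is to exploit that $\copt$ is $RO(\GG)$-graded and that $I_e$ is manifestly a graded subgroup, so that the whole statement reduces to a check on homogeneous elements. First I would record the homogeneous piece of $I_e$ in each grading. At the origin it is $\mathfrak m := \{a + bg \in A(\GG) : a \text{ even}\}$, which is the kernel of the ring homomorphism $A(\GG)\to\Z/2$ sending $a+bg\mapsto a \bmod 2$, and hence an ideal of $A(\GG)$; along the positive $\sigma$-axis (the $e$-tower) it is the even part $2\Z\langle e^m\rangle$; along the second-quadrant diagonal (the powers $\xi^\ell$) it is $\Z\langle\tau(\iota^{2\ell})\rangle = 2\Z\langle\xi^\ell\rangle$ (here $\tau(\iota^{2\ell}) = 2\xi^\ell$, as for $\ell=1$); along the negative $\sigma$-axis (the classes $e^{-m}\kappa$) and along the fourth-quadrant diagonal (the classes $\tau(\iota^{-2n})$) it is the entire group; and it is $0$ in every other grading. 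Each of these is a subgroup, so $I_e$ is an additive subgroup, and by bilinearity it then suffices to prove $xy\in I_e$ for $x$ a homogeneous generator of one of these pieces and $y\in\copt$ homogeneous.

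I would organize the remaining verification by the grading into which $xy$ falls. Two regions are free: whenever $xy$ lands on the negative $\sigma$-axis or on the fourth-quadrant diagonal, $I_e$ is all of $\copt$ there and nothing needs checking. The substance lies in products landing at the origin, in the $e$-tower, or on the second-quadrant diagonal, where membership in $I_e$ means being an even multiple of the relevant generator; the point is that the relations of Section~\ref{sec:cohomology} always produce even multiples. At the origin, $g(a+bg) = (a+2b)g$ and $\kappa(a+bg) = a\kappa$ stay inside $\mathfrak m$; multiplying into the $e$-tower, $ge^m = 0$ and $\kappa e^m = (2-g)e^m = 2e^m$; multiplying into the diagonal, $g\xi^\ell = 2\xi^\ell$. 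The one genuinely mixed family is a negative-axis generator times a positive power of $e$: from $e^m\cdot e^{-m}\kappa = \kappa$ one gets $e^k\cdot e^{-m}\kappa = e^{k-m}\kappa$, which equals $2e^{k-m}$ when $k>m$, equals $\kappa\in\mathfrak m$ when $k=m$, and lands back on the negative axis when $k<m$ — in every case inside $I_e$.

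The last case, and the one I expect to be the main obstacle, is a product landing in the fourth-quadrant off-diagonal. These are the only gradings of $\copt$ of odd underlying fixed rank, so such a product can arise only by multiplying an element of $I_e$ (all of whose generators sit in even fixed rank) by one of these off-diagonal classes, each of which is $2$-torsion. The plan is to show every such product vanishes. For the generators that are themselves divisible by $2$ (the $e$-tower and the second-quadrant diagonal) this is immediate, since $2y = 0$. For the origin generators one reduces, via $g = \tau(1)$ and the projection formula together with $\rho\tau = 2$, to $gy = 2y = 0$, whence also $\kappa y = (2-g)y = 0$. The delicate point is the negative-axis generators $e^{-m}\kappa$: here one must use the transfer calculus of Section~\ref{sec:cohomology} (the relations $g\,e^{-m}\kappa = 0$ and $e^m\cdot e^{-m}\kappa = \kappa$) to force the product into the $2$-torsion and conclude that it vanishes. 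This fourth-quadrant bookkeeping is exactly where the infinitely divisible negative-cone classes — the ones not reached from the positive multiplicative generators — have to be handled by hand, and it is the part of the argument demanding the most care.
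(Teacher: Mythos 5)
Your overall strategy---reduce to homogeneous elements, identify the graded pieces of $I_e$, and check products case by case---is sound, and it is essentially what the paper's one-line proof (``a straightforward check from the known structure of $\copt$'') intends; your identification of the pieces and your verifications at the origin, on the $e$-tower, and on the second-quadrant diagonal are correct. But there are two problems. The smaller one: for the origin generators you justify ``$gy = 2y = 0$'' by ``the projection formula together with $\rho\tau = 2$.'' This conflates the two composites. The projection formula gives $g\cdot y = \tau(1)\cdot y = \tau(\rho(y))$, i.e.\ $\tau\rho$ is multiplication by $g$ --- that is the whole point of $g$ --- while $\rho\tau = 2$ says nothing about $g\cdot y$. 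The correct way to finish is to observe that $y$ is $2$-torsion while $H^{RO(\GG)}(S^0;\Z)\iso\Z[\iota^{\pm1}]$ is torsion-free, so $\rho(y)=0$, hence $gy=\tau(\rho(y))=0$ and then $\kappa y=(2-g)y=0$.

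The genuine gap is the case you yourself flag as delicate, which is asserted rather than proved: $e^{-m}\kappa\cdot\theta$ for $\theta$ a fourth-quadrant $\Z/2$ class. The two relations you cite, $g\,e^{-m}\kappa=0$ and $e^m\cdot e^{-m}\kappa=\kappa$, do not determine this product. What they yield is $e^m\cdot(e^{-m}\kappa\cdot\theta)=\kappa\theta=(2-g)\theta=0$ (using $2\theta=0$ and $g\theta=0$ as above); to conclude that $e^{-m}\kappa\cdot\theta$ itself vanishes you must know in addition that multiplication by $e^m$ is injective on the $\Z/2$ group in which the product lies --- equivalently, that the fourth-quadrant $\Z/2$ classes are divisible by $e$, so that $\theta=e^m\theta'$ and $e^{-m}\kappa\cdot\theta=\kappa\theta'=0$. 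That divisibility/injectivity is a real structural fact about the negative cone of $\copt$: it follows, for instance, from exactness of the sequence relating multiplication by $e$, $\rho$, and $\tau$ (coming from the cofibration $\GG_+\to S^0\to S^\sigma$), because the nonequivariant groups vanish off the anti-diagonal and are torsion-free on it; and it is exactly the part of the ``known structure'' in the cited reference that the paper's proof leans on. Without importing it, ``force the product into the $2$-torsion'' is not an argument, and your proof does not close. (A completeness quibble of the same flavor: you never treat products of the classes $\tau(\iota^{-2n})$, which lie entirely in $I_e$, with second-quadrant elements, e.g.\ $\tau(\iota^{-2n})\cdot\xi^{n}$ landing at the origin; these are harmless via the projection formula, $\tau(\iota^{-2n})\cdot z=\tau(\iota^{-2n}\rho(z))\in I_e$, but they belong in the case list.)
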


\begin{proof}
This is a straightforward check from the known structure of $\copt$,
as given in~\cite{CHTFiniteProjSpace}.
\end{proof}

On the other hand, $T$ is not an ideal, because $e\xi \notin T$ while $e\in T$.
But $T$ is an additive subgroup.

An important fact about $T$ is that, as shown in the following diagram,
all of its elements lie in gradings of the form $n\sigma$ or $2n(1+\sigma)$,
that is, on the vertical line through the origin or the diagonal through the origin with slope $-1$.
Closed circles indicate copies of $\Z$,
while the box at the origin is $A(\GG)$.
$T$ is a free $\Z$-module.
\[
\begin{tikzpicture}[x=4mm, y=4mm]
	\draw[step=2, gray, very thin] (-7.8, -7.8) grid (7.8, 7.8);
	\draw[thick] (-8, 0) -- (8, 0);
	\draw[thick] (0, -8) -- (0, 8);
    \node[right] at (8,0) {$a$};
    \node[above] at (0,8) {$b\sigma$};
    \node[below] at (0,-8) {$T$};

    \fill (-0.3, -0.3) rectangle (0.3, 0.3);
    \fill (0, -7) circle(0.2);
    \fill (0, -6) circle(0.2);
    \fill (0, -5) circle(0.2);
    \fill (0, -4) circle(0.2);
    \fill (0, -3) circle(0.2);
    \fill (0, -2) circle(0.2);
    \fill (0, -1) circle(0.2);
    \fill (0, 1) circle(0.2);
    \fill (0, 2) circle(0.2);
    \fill (0, 3) circle(0.2);
    \fill (0, 4) circle(0.2);
    \fill (0, 5) circle(0.2);
    \fill (0, 6) circle(0.2);
    \fill (0, 7) circle(0.2);

    \fill (-2, 2) circle(0.2);
    \fill (-4, 4) circle(0.2);
    \fill (-6, 6) circle(0.2);

    \fill (2, -2) circle (0.2);
    \fill (4, -4) circle (0.2);
    \fill (6, -6) circle (0.2);
    
     \node[right] at (0,1) {$e$};
    \node[below left] at (-2,2) {$\tau(\iota^2)$};
    \node[above right] at (2,-2) {$\tau(\iota^{-2})$};
    \node[left] at (0,-1) {$e^{-1}\kappa$};

\end{tikzpicture}
\]

Another fact that follows from the known structure of $\copt$ is that
the quotient ring $\copt/I_e$ is all 2-torsion.

\begin{remark}
The ideal $I_e$ is almost, but not quite, the kernel of the restriction map
$\copt = \HA(S^0;\Mackey A)\to \HA(S^0;\Mackey{\Z/2})$.
That kernel would not contain all the elements $a\tau(\iota^{-2m})$ for $m\geq 1$, but only
those of the form $2a\tau(\iota^{-2m})$. Either ideal would serve our purpose here,
but we chose to use the one that is slightly simpler to describe.
\end{remark}

\begin{definition}\ 
\begin{itemize}
\item
Let 
$\tilde T \subset H_\GG^{RO(\Pi B)}(\Xpq pq_+)$
denote the set of linear combinations of elements of our preferred basis of
$H_\GG^{RO(\Pi B)}(\Xpq pq_+)$, with coefficients in $T$.
\item
Let $J_e$ be the ideal defined by
\[
    J_e = I_e\cdot H_\GG^{RO(\Pi B)}(\Xpq pq_+) \subset H_\GG^{RO(\Pi B)}(\Xpq pq_+).
\]
\end{itemize}
\end{definition}

Every element of $J_e$ is a linear combination of elements from our preferred basis
with coefficients in $I_e$ (and this would be true for any basis we used).
Because $J_e\subset \tilde T$, the following facts about $\tilde T$ apply to $J_e$ as well.

\begin{lemma}\label{lem:triple}
Every element $x\in \tilde T$ is a linear combination of at most three basis elements:
If $x$ lies in grading $m(\omega-2) + a + b\sigma$, 
the only basis elements that can contribute to $x$ are the one (if any) lying on the
same diagonal as $x$, that is, in a grading $m(\omega-2) + a' + b'\sigma$ with $a'+b' = a+b$,
and the two (at most) lying in the same vertical line as $x$, that is,
in gradings $m(\omega-2) + a + b'\sigma$.
\end{lemma}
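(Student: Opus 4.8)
The plan is to reduce the statement to a pure grading count. First I would use that the elements $P^{(m)}_i$ form a basis of $H_\GG^{RO(\Pi B)}(\Xpq pq_+)$ over $\copt$, so that the coefficient vector of any element is unique; consequently $x\in\tilde T$ forces every coefficient $\alpha_i$ of $x$ to lie in $T$. Assuming, as in the hypothesis, that $x$ is homogeneous of grading $\gamma=m(\omega-2)+a+b\sigma$, I would then argue that each $\alpha_i$ is itself homogeneous, of grading $\gamma-\grad P^{(m)}_i$: splitting $\sum_i\alpha_i P^{(m)}_i$ into its graded pieces and comparing the grading-$\gamma$ part with the uniqueness of the coefficient vector shows that $\alpha_i$ coincides with its homogeneous component in that single grading.

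Next I would bring in the two facts recorded before the lemma. From property (2) of the basis, $\grad P^{(m)}_i=m(\omega-2)+2a_i+2b_i\sigma$ with $a_i+b_i=i$, so that $\gamma-\grad P^{(m)}_i=(a-2a_i)+(b-2b_i)\sigma$, an element of $RO(\GG)$. From the structure of $T$ established before the lemma, every nonzero homogeneous element of $T$ lies on one of the two lines through the origin in the cohomology of a point: the vertical axis (first coordinate $0$) or the slope $-1$ diagonal (coordinate sum $0$). Hence $\alpha_i$ can be nonzero only if $\gamma-\grad P^{(m)}_i$ meets one of these lines, i.e. only if $a-2a_i=0$ or $(a-2a_i)+(b-2b_i)=0$.

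These two conditions read $a_i=a/2$ and $2i=a+b$. The first says $P^{(m)}_i$ shares the vertical line of $x$ (its grading is $m(\omega-2)+a+2b_i\sigma$), and property (4) of the basis guarantees at most two indices $i$ with a prescribed value of $a_i$, so at most two such terms occur. The second determines $i$ uniquely and says $P^{(m)}_i$ lies on the diagonal of $x$ (its grading has coordinates summing to $a+b$), so at most one such term occurs. Therefore at most $2+1=3$ basis elements can have nonzero coefficient; the overlap case, where a single basis element satisfies both conditions—necessarily the one in $x$'s own grading, corresponding to the origin of $T$—only decreases the count. This is exactly the assertion.

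The argument is a bookkeeping of gradings, so there is no genuine analytic difficulty; the one step needing care is the reduction in the first paragraph, namely that homogeneity of $x$ forces every coefficient to be homogeneous in the single grading $\gamma-\grad P^{(m)}_i$, since this is what licenses applying the support constraint on $T$ term by term. Everything else—in particular the identification of the support of $T$ with the two stated lines—has already been established in the discussion preceding the lemma.
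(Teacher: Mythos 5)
Your proof is correct and takes essentially the same approach as the paper's, whose entire argument is the one-line observation that the claim follows from the locations of the basis elements and of the elements of $T$. You have simply made the bookkeeping explicit---uniqueness of the coefficient vector, reduction to homogeneous coefficients in grading $\gamma-\grad P^{(m)}_i$, the two support lines of $T$, and the appeal to properties (2) and (4) of the basis---which is exactly what the paper's proof implicitly relies on.
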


\begin{proof}
This follows from the description of the locations of the basis elements
given in the preceding section together with the locations of the elements of $T$.
\end{proof}

See the example in Remark~\ref{rem:example} below for an illustration of this lemma.


\begin{proposition}\label{prop:restrictionsdetermine}
If $x\in \tilde T$, then $x$ is determined by its restrictions $\rho(x)$ and $x^\GG$.
\end{proposition}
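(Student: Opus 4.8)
The plan is to work one grading at a time and use Lemma~\ref{lem:triple} to reduce to a linear combination of at most three basis elements, then to show that $\rho$ and $(-)^\GG$ between them recover every coefficient. Writing the grading of $x$ as $m(\omega-2)+a+b\sigma$, Lemma~\ref{lem:triple} lets us write $x = \alpha P_d + \beta' P' + \beta'' P''$, where $P_d$ is the (unique, if present) basis element on the same diagonal as $x$, so that its coefficient $\alpha\in T$ lies on the anti-diagonal part of $T$ (either $\alpha = a_0\tau(\iota^{2\ell})$ with $\ell\neq 0$, or $\alpha\in A(\GG)=\copt^0$ at the origin), while $P'$ and $P''$ are the at most two basis elements on the same vertical line as $x$, with coefficients $\beta',\beta''\in T$ on the vertical part of $T$ (multiples of $e^m$ or $e^{-m}\kappa$, or again in $A(\GG)$). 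Recovering $x$ is the same as recovering these coefficients.

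The mechanism rests on the fact, visible in the tables of Section~\ref{sec:cohomology}, that $\rho$ and $(-)^\GG$ see complementary parts of $T$: on the anti-diagonal one has $\rho(\tau(\iota^{2\ell}))=1$ but $\tau(\iota^{2\ell})^\GG=0$, whereas on the vertical part $\rho(e^m)=\rho(e^{-m}\kappa)=0$ but $(e^m)^\GG=1$ and $(e^{-m}\kappa)^\GG=2$. I would first treat the generic case, where $\alpha$ is a $\tau$-multiple and $\beta',\beta''$ are $e$-type. Since $\rho$ is a ring map with $\rho(P_i)=\cd[i]$, the vertical terms drop out and $\rho(x)=\rho(\alpha)\,\cd[d]$; as the grading pins down $\ell$, this recovers $\alpha$. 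Dually, since $(-)^\GG$ is a ring map and $\alpha^\GG=0$, the diagonal term drops out and $x^\GG=(\beta')^\GG(P')^\GG+(\beta'')^\GG(P'')^\GG$.

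The crux is to show that $x^\GG$ determines $(\beta')^\GG$ and $(\beta'')^\GG$, which then determine $\beta'$ and $\beta''$ because $(e^m)^\GG=1$ and $(e^{-m}\kappa)^\GG=2$ are injective on integer multiples. The two vertical basis elements share an $a$-coordinate, and the $B^0$- and $B^1$-components of the fixed-point grading of a basis element depend only on its $a$-coordinate and on $m$ (they are $2a_i$ and $-2m+2a_i$), not on its $b$-coordinate; hence $(P')^\GG$ and $(P'')^\GG$ lie in the \emph{same} bidegree of $H^*(\Xp p_+)\dirsum H^*(\Xq q_+)$, a group of rank at most two. The heart of the proof is therefore the computation, from the generator values $\cwt^\GG=(1,0)$, $\cxwt^\GG=(0,1)$, $\cwd^{\,\GG}=(\cd,1)$, $\cxwd^{\,\GG}=(1,\cd)$ together with the explicit basis following Theorem~\ref{thm:cohomStructure}, that $(P')^\GG$ and $(P'')^\GG$ are linearly independent; in fact the relevant $2\times2$ integer matrix is unimodular, as one checks for $m=0$ where $P_{2k}^\GG=(\cd[k],\cd[k])$ and $P_{2k+1}^\GG=(0,\cd[k])$, with matrix $\bigl(\begin{smallmatrix}1&1\\0&1\end{smallmatrix}\bigr)$. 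I expect this independence computation, together with the bookkeeping of the several shapes the basis takes as $m$ varies and the edge cases in which one fixed-point component vanishes because the relevant power of $\cd$ vanishes on $\Xp p$ or $\Xq q$ (so that only one vertical element contributes and a single nonzero component suffices), to be the main obstacle.

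Finally I would dispatch the degenerate case, in which the diagonal element coincides with a vertical one. This happens exactly when the coefficient $\gamma$ involved lies in $A(\GG)$, say $\gamma=a_0+b_0 g$, since a diagonal coefficient at the origin forces the element to be vertical as well, and conversely; hence this is the only way an $A(\GG)$-coefficient can occur. Here both maps detect $\gamma$: using $1^\GG=1$ and $g^\GG=0$ we get $\gamma^\GG=a_0$, recovered from $x^\GG$ (together with any remaining vertical coefficient, via the independence above), and then $\rho(\gamma)=a_0+2b_0$ recovers $b_0$ from $\rho(x)$. Since the generic and degenerate cases exhaust all possibilities, all coefficients are recovered and the proposition follows.
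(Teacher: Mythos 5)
Your proof is correct and takes essentially the same route as the paper's: decompose $x$ by Lemma~\ref{lem:triple} into one diagonal and at most two vertical terms, recover the diagonal ($\tau$-type) coefficient from $\rho(x)$ and the vertical ($e$-type or $A(\GG)$) coefficients from $x^\GG$, handling the degenerate coincident case the same way. The one point of divergence is the step you flag as the main obstacle---the linear independence of the fixed-point restrictions of the relevant basis elements---which the paper does not recompute but simply quotes from \cite[4.6]{CHTFiniteProjSpace}, where the standard basis is shown to restrict to a nonequivariant basis of the cohomology of $\Xpq pq^\GG$; your $m=0$ unimodularity check is a special case of that already-established result, so the case-by-case bookkeeping you anticipate is not needed.
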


\begin{proof}
By the preceding lemma, $x$ can be written as a linear combination of
at most three elements from our standard basis. There are various cases that
should be considered. Suppose, for example, that $x$ lies on the same diagonal
as a basis element $P_n$ and lies above two basis elements $P_k$ and $P_{k-1}$.
Then we can write
\[
    x = \alpha\tau(\iota^{2\ell})P_n + \beta e^m P_k + \gamma e^{m+2} P_{k-1}
\]
for some integers $\alpha$, $\beta$, $\gamma$, $\ell$, and $m$.
We now appeal to \cite[4.6]{CHTFiniteProjSpace}, where we showed that our standard basis
restricts to a nonequivariant basis for $\Xpq pq$ and a nonequivariant basis for $\Xpq pq^\GG$.
We have $\rho(x) = 2\alpha\rho(P_n)$, so $\alpha$ is determined by $\rho(x)$.
On the other hand, $x^\GG = \beta P_k^\GG + \gamma P_{k-1}^\GG$, so $\beta$ and $\gamma$
are determined by $x^\GG$.

There are other cases, for example, where $x$ lies below two basis elements rather than above,
or where it lies in the same grading as a basis element. Each of these cases can be handled
in the same way as the case above.
\end{proof}

Note that this is not true for general elements of $H_\GG^{RO(\Pi B)}(\Xpq pq_+)$
because there are elements of $\copt$ that vanish under both $\rho$ and $(-)^\GG$.

For any $x\in H_\GG^{RO(\Pi B)}(\Xpq pq_+)$, we have
\[
    \rho(x) \in H^\Z(\Xp{p+q}_+),
\]
so $\rho(x) = \Delta\cd[k]$ for some integers $\Delta$ and $k$, or is 0, in which case we set $\Delta = 0$. 
We also have
\[
    x^\GG \in H^\Z(\Xp p_+) \dirsum H^\Z(\Xp q_+),
\]
so $x^\GG = (\Delta_0 \cd[i], \Delta_1\cd[j])$ for some integers $\Delta_0$, $\Delta_1$, $i$, and $j$.
(Again, we set $\Delta_0 = 0$ if $\Delta_0\cd[i] = 0$ and $\Delta_1 = 0$ if $\Delta_1\cd[j] = 0$.)

\begin{definition}
We call the triple of integers $(\Delta,\Delta_0,\Delta_1)$ determined as above the {\em $\GG$-degrees} of $x$.
\end{definition}

\begin{corollary}\label{cor:degreesdetermine}
If $x\in\tilde T$, then $x$ is determined by its grading and its $\GG$-degrees.
\end{corollary}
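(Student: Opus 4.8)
The plan is to deduce this directly from Proposition~\ref{prop:restrictionsdetermine}, which already shows that an element of $\tilde T$ is determined by the pair $(\rho(x), x^\GG)$. All that remains is to check that, once the grading of $x$ is fixed, the $\GG$-degrees $(\Delta,\Delta_0,\Delta_1)$ carry exactly the same information as $(\rho(x), x^\GG)$.

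First I would observe that the grading alone pins down the cohomological degrees in which the two restrictions live. If $x$ sits in a grading $\alpha$ of the form $m\omega + RO(\GG)$, then $\rho(x)$ lands in $H^{|\alpha|}(\Xpq pq_+) \iso H^{|\alpha|}(\Xp{p+q}_+)$, while $x^\GG$ lands in $H^{|\alpha_0^\GG|}(\Xp p_+) \dirsum H^{|\alpha_1^\GG|}(\Xq q_+)$. The three integers $|\alpha|$, $|\alpha_0^\GG|$, and $|\alpha_1^\GG|$ are determined by $\alpha$, being precisely the triple of ranks that, as recalled in Section~\ref{sec:cohomology}, specifies an element of $RO(\Pi B)$. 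A short computation using $\omega = 2 + \Omega_1$ records the exact values, but the only thing that matters is that they are fixed once the grading is.

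Next I would invoke the structure of the target groups. Each of $H^*(\Xp{p+q}_+)$, $H^*(\Xp p_+)$, and $H^*(\Xq q_+)$ is a truncated polynomial ring on $\cd$ over $\Z$, hence is free of rank one in each even degree that is in range and zero otherwise. Consequently $\rho(x) = \Delta\,\cd[k]$ with $2k = |\alpha|$, and $x^\GG = (\Delta_0\,\cd[i], \Delta_1\,\cd[j])$ with $2i = |\alpha_0^\GG|$ and $2j = |\alpha_1^\GG|$; the exponents $k$, $i$, $j$ are the degrees identified in the previous step, so they are read off from the grading, whereas the coefficients are exactly the $\GG$-degrees. (In the degenerate cases where a target vanishes, the corresponding restriction is forced to be $0$ and the associated degree is $0$ by convention, so nothing is lost.) Thus the grading together with $(\Delta,\Delta_0,\Delta_1)$ reconstructs the pair $(\rho(x), x^\GG)$, and Proposition~\ref{prop:restrictionsdetermine} then finishes the argument.

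I do not expect a genuine obstacle here: this is a bookkeeping corollary of Proposition~\ref{prop:restrictionsdetermine}. The one point requiring care is the middle step---confirming that the rank data $(|\alpha|, |\alpha_0^\GG|, |\alpha_1^\GG|)$ really is a function of the grading, and that each target is concentrated in a single degree where it has rank one, so that one integer suffices to name the class. Both facts are immediate from the description of $RO(\Pi B)$ and of $H^*(\Xp N_+)$ recalled earlier.
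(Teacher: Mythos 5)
Your proposal is correct and follows essentially the same route as the paper: both arguments show that the grading together with the $\GG$-degrees $(\Delta,\Delta_0,\Delta_1)$ reconstructs the pair $(\rho(x),x^\GG)$---since each restriction lands in a group that is free of rank one on a power of $\cd$ (or zero) in a degree read off from the grading---and then invoke Proposition~\ref{prop:restrictionsdetermine}. The only cosmetic difference is that the paper normalizes the grading so the non-$\sigma$ coordinate is even and writes the explicit formulas $\rho(x)=\Delta\cd[(a+b)/2]$ and $x^\GG=(\Delta_0\cd[a/2],\Delta_1\cd[a/2-m])$, whereas you argue the same point abstractly and absorb the vanishing cases into the degree-zero convention.
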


\begin{proof}
Suppose that $x$ lies in grading $m(\omega-2) + a + b\sigma$
and that the degrees of $x$ are $(\Delta,\Delta_0,\Delta_1)$.
By the structure of $\tilde T$ and the locations of the basis elements,
we can assume that $a$ is even.
Then we have
\[
    \rho(x) = 
    \begin{cases}
        \Delta\cd[(a+b)/2] & \text{if $b$ is even} \\
        0 & \text{otherwise}
    \end{cases}
\]
and
\[
    x^\GG = (\Delta_0\cd[a/2], \Delta_1\cd[a/2-m])
\]
Thus, the grading of $x$ and its degrees determine $\rho(x)$ and $x^\GG$, so the
result follows from the preceding proposition.
\end{proof}




In order to apply these results to derive the two parts of B\'ezout's theorem, we
need to know a little more about the line bundles that are the summands of $F$ as
in the B\'ezout context~\ref{context}.
In \cite{CHTFiniteProjSpace} we showed that
the line bundles over $\Xpq pq$ all have the form $O(d)$ or $\chi O(d)$.
It is useful to further break these down into four types:
\begin{align*}
    \text{Type I: } & \text{bundles of the form $O(2d+1)$} \\
    \text{Type II: } & \text{bundles of the form $O(2d)$} \\
    \text{Type III: } & \text{bundles of the form $\chi O(2d+1)$} \\
    \text{Type IV: } & \text{bundles of the form $\chi O(2d)$}    
\end{align*}
The fixed points $O(2d+1)^\GG$ of a bundle of type I have fiber $\C$ over $\Xp p$ and $0$ over $\Xq q$,
while the reverse is true for a bundle of type III.
The fixed points $O(2d)^\GG$ of a bundle of type II have fiber $\C$ over both components of $\Xpq pq^\GG$,
while the fixed points of a bundle of type IV have fiber $0$ over both components.

In \cite{CHTFiniteProjSpace} for $\dagger\in\{\text{I,\,II,\,III,\,IV}\}$ we wrote $n_\dagger$ for the number of summands of type $\dagger$ and $d_\dagger$ for the products
of their degrees. 
These are related to the ranks and $\GG$-degrees of $F$ by
\begin{align}
    n &= n_\I + n_\II + n_\III + n_\IV \notag\\
    n_0 &= n_\I + n_\II \notag\\
    n_1 &= n_\II + n_\III \notag \\
    \Delta &= d_\I d_\II d_\III d_\IV \notag\\
    \Delta_0 &= 
        \begin{cases}
            d_\I d_\II & \text{if\ \,$n_0 < p$} \\
            0 & \text{if\,\  $n_0 \geq p$}
        \end{cases} \label{Delta_0}\\
    \Delta_1 &= 
        \begin{cases}
            d_\II d_\III & \text{if\ \,$n_1 < q$} \\
            0 & \text{if\,\  $n_1 \geq q$.}
        \end{cases}\label{Delta_1}
\end{align}
Now, $d_\I$ and $d_\III$ are always odd, and $d_\II$ and $d_\IV$ are even if and only if
there is a summand of type II or IV, respectively.
Notice that, when $n_\II > 0$, the quantities $\Delta$, $\Delta_0$, and $\Delta_1$ will all be even.
If $n_\II = 0$, then $n_0 + n_1 \leq n$, which implies that
\[
    n_0 \leq n - n_1 \leq n - (n-p) = p
\]
and $n_1 \leq q$, similarly, with equality possible only if $n_\IV = 0$.
So, if $n_\II = 0$ but $n_\IV > 0$, we will have $\Delta$ even
and both $\Delta_0$ and $\Delta_1$ odd.
When $n_\II = 0$ and $n_\IV = 0$, we will have $\Delta$ odd
while $\Delta_0$ and $\Delta_1$ will be odd if nonzero.

\begin{theorem}[B\'ezout's Theorem, Part I]\label{thm:bezoutone}
Let $F$ be as in the B\'ezout context~\ref{context}.
Then $e(F)$ lies in $\tilde T$, hence is determined
by its grading, which is
\[
    (n_0 - n_1)(\omega-2) + 2n_0 + 2(n-n_0)\sigma,
\]
and its $\GG$-degrees, which are $(\Delta,\Delta_0,\Delta_1)$.
Moreover, the grading and degrees can be recovered from $e(F)$.
The ranks $(n,n_0,n_1)$ are additive while the degrees are multiplicative.
\end{theorem}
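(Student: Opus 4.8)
The plan is to reduce everything to the two restriction maps $\rho$ and $(-)^\GG$ and then invoke the determination results of this section. The starting point is that the Euler class is multiplicative, so if $F=\bigoplus_i L_i$ then $e(F)=\prod_i e(L_i)$, and since each $L_i$ is of one of the four types it suffices to understand the contribution of each type. For the grading I would use that the grading of the Euler class of a line bundle is its equivariant rank in $RO(\Pi B)$ and that gradings add under products: type I contributes $\omega$, type II contributes $2$, type III contributes $\chiw$, and type IV contributes $2\sigma$. Summing with multiplicities and rewriting $\omega=(\omega-2)+2$ and $\chiw=-(\omega-2)+2\sigma$, the total grading is $(n_\I-n_\III)(\omega-2)+2(n_\I+n_\II)+2(n_\III+n_\IV)\sigma$, which becomes $(n_0-n_1)(\omega-2)+2n_0+2(n-n_0)\sigma$ after substituting $n_0=n_\I+n_\II$, $n_1=n_\II+n_\III$, and $n=n_\I+n_\II+n_\III+n_\IV$.

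Next I would compute the restrictions, again using multiplicativity, now of the ring maps $\rho$ and $(-)^\GG$ and their tabulated values on generators. Nonequivariantly every summand is an honest degree-$d$ line bundle, so $\rho(e(F))=\prod_i d_i\,\cd=\Delta\,\cd[n]$. On fixed sets, over $\Xp p$ the honest line-bundle factors are those of types I and II while the other summands contribute a normal $\C^\sigma$-factor whose fixed-point Euler class is a unit, and over $\Xq q$ the honest factors are types II and III; collecting these gives $e(F)^\GG=(d_\I d_\II\,\cd[n_0],\,d_\II d_\III\,\cd[n_1])$, which is exactly $(\Delta_0\,\cd[n_0],\Delta_1\,\cd[n_1])$ by \eqref{Delta_0} and \eqref{Delta_1} (the factor $\cd[n_0]$ vanishes once $n_0\ge p$, and similarly for $n_1$). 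Thus the $\GG$-degrees of $e(F)$ are $(\Delta,\Delta_0,\Delta_1)$.

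The crux, and the step I expect to be the main obstacle, is to show that $e(F)$ actually lies in $\tilde T$; granting this, Corollary~\ref{cor:degreesdetermine} at once yields that $e(F)$ is determined by the grading and $\GG$-degrees just computed. The difficulty is that $\tilde T$ is only an additive subgroup and is \emph{not} closed under multiplication, so membership cannot simply be inherited from the factors $e(L_i)$. I would argue by induction on the number of summands, reducing to the claim that multiplication by a single $e(L_i)$ carries $\tilde T$ into itself. Writing each $e(L_i)=b_i+j_i$ with $b_i$ an integer multiple of a single basis element and $j_i\in J_e$, the contributions involving $j_i$ stay in the ideal $J_e\subseteq\tilde T$, so the whole problem collapses to controlling the product of leading terms $b_i$. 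Reducing such a product to the standard basis via the relations of Theorem~\ref{thm:cohomStructure} can a priori produce coefficients lying outside $T$ -- odd multiples of powers of $\xi$ being the dangerous case, since $T$ contains only their even multiples. The heart of the proof is to check, case by case according to how many type II and type IV summands occur, that these coefficients always appear with even multiplicity and hence remain in $T$; this is precisely the purpose of the parity observations recorded just before the theorem (that $n_\II>0$ forces $\Delta,\Delta_0,\Delta_1$ all even, and so on).

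Finally, for recoverability and the last two assertions: reading the grading of $e(F)$ as $m(\omega-2)+2a+2b\sigma$ returns the ranks by $n_0=a$, $n=a+b$, $n_1=a-m$, while $\rho(e(F))$ and $e(F)^\GG$ return the degrees, so everything can be recovered from $e(F)$. Additivity of the ranks is immediate from $n_\dagger(F\dirsum F')=n_\dagger(F)+n_\dagger(F')$, and multiplicativity of the degrees -- including the collapse to $0$ when $n_0+n_0'\ge p$ or $n_1+n_1'\ge q$ -- follows from the same multiplicativity of $\rho$ and $(-)^\GG$ together with \eqref{Delta_0}--\eqref{Delta_1}.
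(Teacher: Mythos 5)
Your proposal follows the same skeleton as the paper's proof: read the grading off the ranks, get the degrees from $\rho$ and $(-)^\GG$ using multiplicativity, reduce $e(F)=\prod_i e(L_i)$ modulo the ideal $J_e$, and finish by Corollary~\ref{cor:degreesdetermine}. The gap is in the step you yourself call the heart of the proof. The decomposition $e(L_i)=b_i+j_i$ with $j_i\in J_e$ and $b_i$ a simple term is exactly the content of \cite[Proposition~6.5]{CHTFiniteProjSpace}, which you never quote or verify: modulo $J_e$ one has
\[
    e(O(2d+1))\equiv\cwd,\quad e(O(2d))\equiv 0,\quad e(\chi O(2d+1))\equiv\cxwd,\quad e(\chi O(2d))\equiv e^2.
\]
Without these values your argument has no starting point, and with them the difficulty you set up dissolves: the product of the leading terms is either $0$ (when $n_\II>0$, since the type II leading term is already $0$) or the single monomial $e^{2n_\IV}\cwd^{\;n_0}\cxwd^{\;n_1}$. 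Monomials $\cwd^{\;a}\cxwd^{\;b}$ with $a\leq p$, $b\leq q$, not both equalities, are themselves basis elements, and in the no-type-II case $n_0+n_1\leq n<p+q$ forces $n_0\leq p$ and $n_1\leq q$ with at least one inequality strict; so the product is a $T$-multiple of one basis element and $e(F)\in\tilde T$. No reduction through the relations of Theorem~\ref{thm:cohomStructure} ever occurs, so the ``dangerous'' odd multiples of powers of $\xi$ never arise, and the case-by-case parity check you defer to — which you do not carry out, and for which you give no reason to expect even multiplicities — is unnecessary and off-target. The parity observations before the theorem serve a different purpose: they translate ``$F$ has a type II summand'' into ``$\Delta_0$ and $\Delta_1$ are both even,'' so that the mod-$J_e$ formula can be stated in terms of the degrees.

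A second, sharper problem: the claim your induction reduces to — that multiplication by a single $e(L_i)$ carries $\tilde T$ into itself — is not just unproven but false. For example, $\cwt^m\cwd^{\;p-1}$ (with $m>1$) is a basis element, yet
\[
    \cwd\cdot\cwt^m\cwd^{\;p-1}=\cwt^m\cwd^{\;p}=\cwt^m\cxwt^m\cdot\cxwt^{-m}\cwd^{\;p}=\xi^m\,\cxwt^{-m}\cwd^{\;p},
\]
and $\xi^m$ lies outside $T$; since the elements of $I_e$ in that grading are also even multiples of $\xi^m$, this cannot be repaired modulo $J_e$, so multiplying by $e(O(2d+1))$ does not preserve $\tilde T$. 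The argument survives only because the actual leading terms $\cwd$, $0$, $\cxwd$, $e^2$ never involve $\cwt$ or $\cxwt$: one must compute the full product $\prod_i b_i$ in one stroke (equivalently, strengthen the inductive hypothesis to say the partial products are congruent mod $J_e$ to monomials $e^{2c}\cwd^{\;a}\cxwd^{\;b}$ or to $0$), which is what the paper does. The rest of your write-up — the grading computation, the two restriction computations, recoverability of ranks and degrees, and additivity/multiplicativity — is correct and agrees with the paper's proof.
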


\begin{proof}
The additivity of the grading and the multiplicativity of the degrees are clear
(but see the caveat about multiplicativity given in the Introduction).

Given that $n$ is the nonequivariant (complex) rank of $F$ and
$n_0$ and $n_1$ are the ranks of the restriction of $F^\GG$ to $\Xp p$ and $\Xq q$, respectively,
$e(F)$ must lie in the grading given, which is the grading $\alpha$ with
$|\alpha| = n$, $\alpha_0 = 2n_0 + 2(n-n_0)\sigma$, and $\alpha_1 = n_1 + 2(n-n_1)\sigma$.

Conversely, if $e(F)$ lies in grading $m(\omega-2) + 2a + 2b\sigma$, then we can recover
$n = a + b$, $n_0 = a$, and $n_1 = a - m$.

The degrees $(\Delta,\Delta_0,\Delta_1)$ are, by the nonequivariant B\'ezout theorem, given by
\begin{align*}
    \rho(e(F)) &= \Delta \cd[n] \\
    e(F)^\GG &= ( \Delta_0\cd[n_0], \Delta_1\cd[n_1]),
\end{align*}
using the fact that $\rho$ and $(-)^\GG$ preserve Euler classes.
Thus, we can recover the degrees from $e(F)$.

It remains to show that $e(F)$ is determined by its grading and $\GG$-degrees.

Recall the discussion before the theorem of the four types of line bundles over $\Xpq pq$.
In \cite[Proposition 6.5]{CHTFiniteProjSpace} we computed their Euler classes, which are
\begin{alignat*}{3}
    e(O(2d+1)) &= \cwd + d(\tau(1)\cwd + e^{-2}\kappa\cwt\cwd\cxwd) &&\equiv \cwd &&\pmod{J_e} \\
    e(O(2d)) &= d(\tau(\iota^{-2})\cxwt\cwd + e^{-2}\kappa \cwd\cxwd) &&\equiv 0 &&\pmod{J_e} \\
    e(\chi O(2d+1)) &= \cxwd + d(\tau(1)\cxwd + e^{-2}\kappa\cxwt\cwd\cxwd) &&\equiv \cxwd &&\pmod{J_e} \\
    e(\chi O(2d)) &= e^2 + d\tau(1)\cxwt\cwd &&\equiv e^2 &&\pmod{J_e}.
\end{alignat*}
From (\ref{Delta_0}) and (\ref{Delta_1}),
we see that $\Delta_0$  and $\Delta_1$ are both even if and only if $F$ contains at least one summand of the form $O(2d)$ (type II).
If $F$ does not contain such a summand, then $n_0$ is the number of summands of the form $O(2d+1)$
and $n_1$ is the number of summands of the form $\chi O(2d+1)$, and we will have $n_0 + n_1 \leq n$.
From the congruences above we have, modulo $J_e$, that
\[
    e(F) \equiv
    \begin{cases}
        0 & \text{if $\Delta_0$ and $\Delta_1$ are even} \\
        e^{2(n - n_0 - n_1)}\cwd^{\;n_0}\cxwd^{\;n_1} & \text{if $\Delta_0$ or $\Delta_1$ is odd}.    \end{cases}
\]
When $\Delta_0$ or $\Delta_1$ is odd we have that 
$n_0 \leq p$ and $n_1 \leq q$, with at least one of the inequalities being strict,
so $\cwd^{\;n_0}\cxwd^{\;n_1}$ is a basis element and 
$e^{2(n - n_0 - n_1)}\cwd^{\;n_0}\cxwd^{\;n_1} \in\tilde T$.
It follows that $e(F)\in\tilde T$,
and then the fact that $e(F)$ is determined by its grading and $\GG$-degrees
follows from Corollary~\ref{cor:degreesdetermine}.
\end{proof}




By Lemma~\ref{lem:triple}, the Euler class $e(F)$ can be written as a linear combination
of just three basis elements.
We next work out the explicit expression, which, by Theorem~\ref{thm:bezoutone},
is determined by the grading of
$e(F)$ and its $\GG$-degrees.

\begin{theorem}[B\'ezout's Theorem, Part II]\label{thm:bezouttwo}
Let $F$ be as in the B\'ezout context~\ref{context}.
Then we can write
\[
    e(F) = \alpha P^{(m)}_{n} + \beta P^{(m)}_{k} + \gamma P^{(m)}_{k-1}
\]
for some $1\leq k < p+q$ and some coefficients $\alpha$, $\beta$, and $\gamma$ in $\copt$,
so the coefficient vector of $e(F)$ has at most three nonzero components.
Allowing for the possibility that $n = k$ or $n = k-1$, 
we can arrange that the coefficient $\alpha$
is always an integer multiple of $\tau(\iota^{2i})$ for some $i\in\Z$, and
the coefficients $\beta$ and $\gamma$ are always integer multiples of
$e^{2i}$ or $e^{-2i}\kappa$ for some $i\geq 0$.

Use the briefer notation $P_{n}$ and write $\epsilon = 0$ or $1$ for the remainder on dividing
$n+n_0+n_1$ by $2$. We have
\begin{align*}
 P_{n} &=
   \begin{cases}
 	\cxwt^{-(n+n_0-n_1-2p)} \cwd^{\;p}\cxwd^{\;n-p}
		& \text{if\,\ $n+n_0-n_1 > 2p$} \\
	\cwt^{-(n-n_0+n_1-2q)} \cwd^{\;n-q}\cxwd^{\;q}
		& \text{if\,\ $n-n_0+n_1 > 2q$} \\
	\cxwt^\epsilon \cwd^{\;(n+n_0-n_1+\epsilon)/2}
			\cxwd^{\;(n-n_0+n_1-\epsilon)/2}
		& \text{otherwise,}
   \end{cases}
\\
 P_k &=
    \begin{cases}
        \cxwt\cwd^{\;n_0+1}\cxwd^{\;n_1} & \text{if\,\ $n_0 < p$} \\
        \cxwt^{\;-(n_0-p)}\cwd^{\;p}\cxwd^{\;n_1} & \text{if\,\  $n_0\geq p$,}
    \end{cases}
\\ \intertext{and}
 P_{k-1} &=
    \begin{cases}
        \cwd^{\;n_0}\cxwd^{\;n_1} & \text{if\,\ $n_1 < q$} \\
        \cwt^{\;-(n_1-q)}\cwd^{\;n_0}\cxwd^{\;q} & \text{ if\,\ $n_1\geq q$.}
    \end{cases}
\end{align*}
The coefficient $\alpha$ will be an integer multiple of
\[
 \tau_{n} = 
   \begin{cases}
 	\tau(\iota^{2(n-n_1-p)})
		& \text{if\,\ $n+n_0-n_1 > 2p$} \\
	\tau(\iota^{2(n-n_0-q)})
		& \text{if\,\ $n-n_0+n_1 > 2q$} \\
	\tau(\iota^{n-n_0-n_1-\epsilon})
		& \text{otherwise.}
   \end{cases}
\]
Finally, write $\bar n_0 = \min\{n_0,p-1\}$ and $\bar n_1 = \min\{n_1,q\}$.
Then, we break the result into the following cases.
\begin{enumerate}

\item If $\Delta$ is even, then
\begin{align*}
 \alpha &= \frac{\Delta}{2}\tau_{n}, 
 & \beta &= \frac{\Delta_1-\Delta_0}{2} e^{-2(\bar n_0+\bar n_1-n+1)}\kappa, \\ 
 \gamma &= \frac{\Delta_0}{2} e^{-2(\bar n_0+\bar n_1-n)}\kappa
 & \text{and}\quad k &= \bar n_0 + \bar n_1 + 1.
\end{align*}

\item If $\Delta$ is odd and $\Delta_0\neq 0$, then
\begin{align*}
 \alpha &= \frac{\Delta - \Delta_0}{2}\tau(1)
 & \beta &= \dfrac{\Delta_1 - \Delta_0}{2} e^{-2}\kappa, \\ 
 \gamma &=  \Delta_0
 & \text{and}\quad k &= n+1.
\end{align*}

\item If $\Delta$ is odd and $\Delta_0 = 0$, then
\begin{align*}
 \alpha &= \frac{\Delta - \Delta_1}{2}\tau(1)
 & \beta &= 0, \\ 
 \gamma &=  \Delta_1
 & \text{and}\quad k &= n+1.
\end{align*}
\end{enumerate}

\end{theorem}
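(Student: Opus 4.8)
The plan is to avoid expanding $e(F)$ by brute force and instead to leverage the uniqueness already proved in Part~I. By Theorem~\ref{thm:bezoutone} we know that $e(F)\in\tilde T$, that it sits in the grading $m(\omega-2)+2n_0+2(n-n_0)\sigma$ with $m=n_0-n_1$, and that its $\GG$-degrees are $(\Delta,\Delta_0,\Delta_1)$; moreover, by Corollary~\ref{cor:degreesdetermine}, it is the \emph{unique} element of $\tilde T$ with that grading and those degrees. So it will suffice to write down the candidate $\alpha P^{(m)}_n+\beta P^{(m)}_k+\gamma P^{(m)}_{k-1}$ with the stated coefficients, confirm that it lies in $\tilde T$ and in the correct grading, and verify that $\rho$ of it equals $\Delta\cd[n]$ and $(-)^\GG$ of it equals $(\Delta_0\cd[n_0],\Delta_1\cd[n_1])$. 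Uniqueness then forces the candidate to be $e(F)$, and there is nothing further to derive.

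First I would pin down the three basis elements. The grading of $e(F)$ corresponds to the lattice point $(n_0,\,n-n_0)$, which lies on the diagonal $a+b=n$ and on the vertical line $a=n_0$, so by Lemma~\ref{lem:triple} the only basis elements that can appear are the diagonal element $P_n$ and the (at most two) vertical-line elements, which I label $P_k$ and $P_{k-1}$. Reading off their monomial expressions from the explicit basis of \cite[Prop.~4.7]{CHTFiniteProjSpace} (equivalently, unwinding the recursion defining $F_{p,q}(m)$) produces exactly the case distinctions recorded for $P_n$, $P_k$, $P_{k-1}$, and determines $k$: namely $k=\bar n_0+\bar n_1+1$ when $\Delta$ is even and $k=n+1$ when $\Delta$ is odd (so that $P_{k-1}=P_n$ in the latter case). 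Matching $\sigma$-coordinates then fixes the monomial $\tau_n$ governing $\alpha$: since $e(F)\in\tilde T$, the coefficient on the diagonal element must lie on the anti-diagonal of $T$, forcing $\alpha$ to be an integer multiple of some $\tau(\iota^{2i})$, while the coefficients on the vertical-line elements lie on the vertical axis of $T$, forcing $\beta$ and $\gamma$ to be integer multiples of $e^{2i}$ or $e^{-2i}\kappa$.

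With the candidates in hand the verification is a direct computation using the tabulated values of $\rho$ and $(-)^\GG$ on the generators, together with $\rho(P_i)=\cd[i]$, $(\cwd)^\GG=(\cd,1)$, $(\cxwd)^\GG=(1,\cd)$, $(\cwt)^\GG=(1,0)$, and $(\cxwt)^\GG=(0,1)$. The mechanism is the orthogonality of the two restrictions on $\tilde T$ already exploited in Proposition~\ref{prop:restrictionsdetermine}: $\tau$-classes vanish under $(-)^\GG$ but survive $\rho$, whereas $e$-power classes vanish under $\rho$ but survive $(-)^\GG$, with $(e^{-2i}\kappa)^\GG$ contributing the factor~$2$. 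Thus $\rho$ isolates $\alpha$ and $(-)^\GG$ isolates $\beta,\gamma$, and solving the resulting scalar equations against $\Delta\cd[n]$ and $(\Delta_0\cd[n_0],\Delta_1\cd[n_1])$ reproduces the stated coefficients. The partition into three cases is dictated by the structure recorded just before the theorem: $\Delta$ even corresponds to the presence of a type~II or type~IV summand, forcing either all of $\Delta,\Delta_0,\Delta_1$ even or $\Delta$ even with $\Delta_0,\Delta_1$ odd, while $\Delta$ odd forces $n=n_0+n_1$ and splits further according to whether $\Delta_0=0$, i.e.\ whether $n_0=p$.

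The main obstacle, I expect, is bookkeeping rather than ideas, and it concentrates in two places. First, the coincidence cases $n=k$ or $n=k-1$: when $\Delta$ is odd we have $P_{k-1}=P_n$, so the $\alpha$- and $\gamma$-terms sit on a single basis element, and keeping them formally separate — $\alpha$ recording the $\rho$-information and $\gamma$ the $(-)^\GG$-information — is precisely what produces the combinations $\tfrac12(\Delta-\Delta_0)$ and $\tfrac12(\Delta-\Delta_1)$ in Cases~(2) and~(3); one must check these are integers, which follows because $\Delta$ and the relevant $\Delta_0$ or $\Delta_1$ then have equal parity. Second, the parity reconciliation: in Case~(1) with $\Delta_0,\Delta_1$ odd the fractions $\tfrac{\Delta_0}{2}$ are not integers, but in that situation $n_{\II}=0$ forces the exponent $\bar n_0+\bar n_1-n$ to be negative, so $e^{-2(\bar n_0+\bar n_1-n)}\kappa=2\,e^{2(n-\bar n_0-\bar n_1)}$ by the identity $e^m\kappa=2e^m$ for $m>0$, and the factor~$2$ absorbs the denominator so that the coefficient genuinely lies in $T$. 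Checking that these two mechanisms interlock in every case, and that $\cd[p]=0$ on $\Xp p$ and $\cd[q]=0$ on $\Xq q$ kill the spurious fixed-point contributions exactly when $\Delta_0$ or $\Delta_1$ vanishes, is the technical heart of the argument.
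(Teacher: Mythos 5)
Your proposal is correct and follows essentially the same route as the paper's own proof: both use Part I together with Lemma~\ref{lem:triple} to reduce to three basis elements, identify them and the monomials $\tau_n$, $e^{2i}$, $e^{-2i}\kappa$ from the grading and the explicit basis of \cite[Proposition~4.7]{CHTFiniteProjSpace}, and then determine the coefficients by matching $\rho$ and $(-)^\GG$ against $\Delta\cd[n]$ and $(\Delta_0\cd[n_0],\Delta_1\cd[n_1])$, invoking the uniqueness statement for elements of $\tilde T$. Your handling of the edge cases (the coincidence $P_{k-1}=P_n$ when $\Delta$ is odd, and the absorption of the denominator via $e^m\kappa=2e^m$ when $\Delta$ is even but $\Delta_0,\Delta_1$ are odd) likewise mirrors the abuses of notation the paper works through.
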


\begin{proof}


Theorem~\ref{thm:bezoutone} and Lemma~\ref{lem:triple} imply the first claim, that we can write
$e(F)$ in terms of just three basis elements. 

To determine $P_n$, $P_k$, and $P_{k-1}$, we recall from \cite[Proposition~4.7]{CHTFiniteProjSpace}
that the basis elements take one of six possible forms:
\begin{align*}
    \cwt^m\cwd^{\;a}& \quad m > 1,\ a< p &  \cxwt^m\cxwd^{\;b} & \quad m > 1,\ b < q \\
    \cwd^{\;a}\cxwd^{\;b} & \quad a \leq p,\ b\leq q
        & \cxwt  \cwd^{\;a}\cxwd^{\;b} & \quad a \leq p,\ b < q \\
    \cxwt^{-m}\cwd^{\;p}\cxwd^{\;b} & \quad m > 0,\ b < q
        & \cwt^{-m}\cwd^{\;a}\cxwd^{\;q} & \quad m > 0,\ a < p\\
\end{align*}
where we recall that $\cwd^{\;p}\cxwd^{\;q} = 0$, so we do not have $a = p$ and $b=q$ above.

We noted earlier that $e(F)$ lies in grading
\[
    \grad e(F) = (n_0 - n_1)(\omega - 2 ) + 2n_0 + 2(n-n_0)\sigma.
\]
$P_n$ is the unique basis element having grading in $(n_0-n_1)(\omega-2) + RO(\GG)$
restricting to $\cd[n]$, and we can check that the formula given in the statement
of the theorem has those properties.
Similarly, $P_k$ and $P_{k-1}$ are the (at most) two basis elements having gradings of the
form $(n_0-n_1)(\omega-2) + 2n_0 + 2b\sigma$
and we can check that the formulas given have that property.
The coefficient $\tau_{n}$ is the element of the form $\tau(\iota^{2i})$ such that
$\tau_{n} P_{n}$ lies in the same grading as $e(F)$.
The terms of the form $e^m\kappa$ multiplying $P_k$ and $P_{k-1}$
in the formulas for $e(F)$ are determined similarly.

We should point out some abuses of notation we are indulging in.
The formulas for $P_k$ and $P_{k-1}$ evaluate to 0, not basis elements, 
when both $n_0\geq p$ and $n_1\geq q$.
In the case $n_0 < p-1$ and $n_1\geq q$, the formula for $P_k$ is not
a basis element, but we know that its coefficient will be a multiple of $e^m\kappa$ for some
integer $m$, and the product $e^m\kappa P_k = 0$ in that case, because of the relations in
the cohomology of $\Xpq pq$.
A similar vanishing happens in the case of $P_{k-1}$ when $n_0 > p$ and $n_1 < q$.
Finally, the formulas for $P_k$ and $P_{k-1}$ coincide when $n_0 = p$ and $n_1 < q$,
but in that case $\Delta_0 = 0$ so only one copy of this basis element appears in the formula
for $e(F)$.

To verify the coefficients of $P_n$, $P_k$, and $P_{k-1}$, we use the fact that $e(F)$
is determined by the nonequivariant elements
\begin{align*}
    \rho(e(F)) &= \Delta\cd[n] \\
    \mathllap{\text{and}\qquad} e(F)^\GG &= (\Delta_0\cd[n_0], \Delta_1\cd[n_1]),
\end{align*}
so we simply need to check that the formulas of the theorem have the correct values
on applying these restriction maps.

First note that, regardless of which case we fall in, we will always have
\begin{align*}
    \rho(\tau_n P_n) &= 2\cd[n] \\
    (\tau_n P_n)^\GG &= (0,0).
\end{align*}
For $P_k$ and $P_{k-1}$ we have
\begin{align*}
    \rho(P_k) &= \cd[k] & \rho(P_{k-1}) &= \cd[k-1] \\
    P_k^\GG &= (0, \cd[n_1])
    & P_{k-1} &= (\cd[n_0], \cd[n_1])
\end{align*}
which includes the possibility that $P_{k-1}^\GG = (\cd[n_0],0)$ if $n_1 \geq q$.

Now, when $\Delta$ is even, in the formulas given, $\beta$ and $\gamma$ each have a factor of
the form $e^{m}\kappa$, and we have $\rho(e^m\kappa) = 0$ and $(e^m\kappa)^\GG = 2$.
Combined with the formulas above, this verifies case (1) of the theorem,
except that we should say something about the parities of $\Delta_0$ and $\Delta_1$.
From the discussion before Theorem~\ref{thm:bezoutone}, because $\Delta$ is even,
$\Delta_0$ and $\Delta_1$ have the same parity.
There is a possibility that $\Delta_0$ is odd, but this can
happen only when $n_\II = 0$ and $n_\IV > 0$, in which case $n_0 < p$, $n_1 < q$,
and $n_0 + n_1 < n$. The coefficient $\gamma$ in that case is
\[
    \gamma = \frac{\Delta_0}2 e^{-2(n_0+n_1-n)}\kappa = \frac{\Delta_0}2 2e^{2(n-n_0-n_1)}
\]
which we interpret as $\Delta_0 e^{2(n-n_0-n_1)}$ by another abuse of notation.
(The abuse is that division by 2 is not well-defined in $\copt$.)
We then use that $\rho(e^m) = 0$ and $(e^m)^\GG = 1$ for $m>0$.

If $\Delta$ is odd, then $n = n_0 + n_1$, $n_0\leq p$, and $n_1\leq q$.
IF $\Delta_0$ and $\Delta_1$ are both nonzero, then $n_0 < p$ and $n_1 < q$,
$P_n = P_{k-1}$, and the formula in case (2) of the theorem is easily verified.

If $\Delta_0\neq 0$ but $\Delta_1 = 0$, then we have $n_0 < p$ and $n_1 = q$.
In this case, we have
\[
    e^{-2}\kappa P_k = e^{-2}\kappa\cxwt\cwd^{\;n_0+1}\cxwd^{\;q} = 0,
\]
so we allow the abuse of notation that $\Delta_1 - \Delta_0$ is odd in the formula for $\beta$.
With that caveat, the verification of case (2) can be completed.

In Case (3), since $\Delta_0 = 0$ we must have $\Delta_1 \neq 0$ and odd.
The verification is then just as for the previous cases.

The asymmetries in these formulas comes
from an asymmetry in our preferred basis regarding $\cwd$ vs $\cxwd$.
\end{proof}

\begin{remark}
Theorems~\ref{thm:bezoutone} and~\ref{thm:bezouttwo} give us two related ways
of determining $e(F)$: 
It is determined by the ranks $(n,n_0,n_1)$ and the $\GG$-degrees $(\Delta,\Delta_0,\Delta_1)$,
and it is also determined by its triple of nonzero coefficients.
The advantage of using the degrees is that they are multiplicative.
This is simpler to calculate with and also parallels the
result of the nonequivariant B\'ezout theorem, that degrees are multiplicative
under intersection of projective varieties.
\end{remark}

\begin{remark}\label{rem:example}
The summary of Theorem~\ref{thm:bezouttwo} is that $e(F)$
can be expressed in terms of at most three basis elements.
This is not a restriction imposed by the locations of the basis elements.
As an example, consider $\Xpq 55$ and the bundle $F = 4\chi O(2)$, the sum of 4 copies of $\chi O(2)$,
so $n = 4$ and $n_0 = n_1 = 0$.
This Euler class lives in grading
\[
    (n_0-n_1)(\omega-2) + 2n_0 + 2(n-n_0)\sigma = 8\sigma.
\]
The following diagram shows the location of $e(F)$, the ``$\times$'' at $8\sigma$, and the locations of the basis elements in
the $RO(\GG)$-grading:
\[
\begin{tikzpicture}[scale=0.4]
	\draw[step=1cm, gray, very thin] (-1.8, -0.8) grid (5.8, 6.8);
	\draw[thick] (-2, 0) -- (6, 0);
	\draw[thick] (0, -1) -- (0, 7);

    \fill[gray, opacity=0.2] (-0.2, 3.8) -- (0.15, 4.15) -- (5.1, -0.8) -- (-0.2, -0.8) -- cycle;

    \fill (0, 0) circle(5pt); \node[below left=-1pt] at (0,0) {$\scriptstyle P_0$};
    \fill (1, 1) circle(5pt);
    \fill (2, 2) circle(5pt); \node[below right=-1pt] at (2,2) {$\scriptstyle P_4$};
    \fill (3, 3) circle(5pt);
    \fill (4, 4) circle(5pt);

    \fill (0, 1) circle(5pt); \node[above left=-1pt] at (0,1) {$\scriptstyle P_1$};
    \fill (1, 2) circle(5pt);
    \fill (2, 3) circle(5pt);
    \fill (3, 4) circle(5pt);
    \fill (4, 5) circle(5pt);

    \draw[very thick] (-0.2,3.8) -- (0.2,4.2);
    \draw[very thick] (-0.2,4.2) -- (0.2,3.8);

\end{tikzpicture}
\]
The five basis elements within the shaded area have nonzero multiples in degree $8\sigma$,
so could conceivably contribute to $e(F)$,
but the theorem says that it can be written in terms of just three of them,
the one on the same diagonal as $e(F)$, $P_4$, and the two below it, $P_0$ and $P_1$.

In fact, we are in case~(1) of Theorem~\ref{thm:bezouttwo}, with $\Delta = 8$ and $\Delta_0 = \Delta_1 = 1$, so
\[
    e(4\chi O(2)) = 8\tau(\iota^4)P_4 + 0\cdot P_1 + e^8 P_0
        = 8\tau(\iota^4)\cwd^{\;2}\cxwd^{\;2} + e^8.
\]
As it happens, $P_1$ does not actually contribute in this example.
\end{remark}

\begin{remark}
In \cite{CHTFiniteProjSpace}, we looked in detail at the case $n = p+q-1$, so that the
hypersurfaces associated with the line bundle summands of $F$ intersect generically in a $\GG$-set of points
in $\Xpq pq$.
In that case, we showed that the explicit formula for $e(F)$ can be read as telling us how that collection of points
breaks down as free orbits versus fixed points in each of the components of $\Xpq pq^\GG$.
In a followup to this paper, we will show how the Euler class more generally 
gives us geometric information about the intersection of hypersurfaces.
\end{remark}

\section{Comparison with constant $\Z$ coefficients}\label{sec:Z}

Another equivariant cohomology theory commonly used is ordinary cohomology with
coefficients in $\Mackey \Z$, the constant-$\Z$ Mackey functor.
We calculate the Euler class $e(F)$ with $\Mackey\Z$ coefficients and compare
it to the class obtained with Burnside ring coefficients.

As shown in \cite{Co:BGU1preprint}, $H_\GG^{RO(\GG)}(S^0;\Mackey\Z)$ is obtained from
$\copt$ by setting $\kappa = 0$. This has the effect
of removing the elements $e^{-n}\kappa$ and making $2e = 0$.
Since $\kappa = 2-g$, it also has the effect of setting $g = 2$.
Put another way, this theory cannot distinguish between a free orbit and two fixed points.

Because the cohomology of $\Xpq pq$ with $\Mackey A$ coefficients is free over the cohomology of a point,
we obtain the cohomology with $\Mackey\Z$ coefficients by setting $\kappa = 0$.
The result is the following.

\begin{theorem}[{\cite[Corollary~5.4]{CHTFiniteProjSpace}}]\label{thm:ZcohomStructure}
Let $0 \leq p < \infty$ and $0 \leq q < \infty$ with $p+q > 0$.
Then $H_{\GG}^{RO(\Pi B)}(\Xpq{p}{q}_+;\Mackey\Z)$ is a free module over $H_{\GG}^{RO(\GG)}(S^0;\Mackey\Z)$.
Its structure as a graded commutative algebra over $H_{\GG}^{RO(\GG)}(S^0;\Mackey\Z)$
is described as in Theorem~\ref{thm:cohomStructure}, except that
the relation $\cwt \cxwd = (1-\kappa)\cxwt \cwd + e^2$ is replaced by the relation
\[
    \cwt \cxwd = \cxwt \cwd + e^2.
\]
\qed
\end{theorem}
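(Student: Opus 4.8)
The plan is to deduce the $\Mackey\Z$-coefficient computation from the $\Mackey A$-coefficient computation of Theorem~\ref{thm:cohomStructure} by a base-change argument in which freeness is used in an essential way. The canonical surjection of Mackey functors $\Mackey A \to \Mackey\Z$ (the identity on the underlying level $\GG/e$, and on the fixed level $\GG/\GG$ the augmentation sending $g\mapsto 2$, hence $\kappa\mapsto 0$) induces a natural transformation of cohomology theories. On a point it realizes the quotient map $\copt \to \copt/(\kappa) = H_\GG^{RO(\GG)}(S^0;\Mackey\Z)$ recorded in \cite{Co:BGU1preprint}. The goal is to upgrade this to an isomorphism of $\copt/(\kappa)$-algebras
\[
    H_\GG^{RO(\Pi B)}(\Xpq pq_+;\Mackey\Z) \;\cong\; H_\GG^{RO(\Pi B)}(\Xpq pq_+;\Mackey A)\otimes_\copt \copt/(\kappa),
\]
after which the stated structure can be read off directly.

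First I would set up the base change at the spectrum level. Since $\Xpq pq$ is a finite $\GG$-CW complex it is dualizable, so the $\Mackey\Z$-cohomology is the homotopy of $\bigl(D(\Xpq pq_+)\wedge H\Mackey A\bigr)\wedge_{H\Mackey A}H\Mackey\Z$, whose underlying $H\Mackey A$-module has homotopy $H_\GG^{RO(\Pi B)}(\Xpq pq_+;\Mackey A)$. There is then a change-of-coefficients (Künneth) spectral sequence
\[
    E_2^{*,*} = \operatorname{Tor}^\copt_{*,*}\!\bigl(H_\GG^{RO(\Pi B)}(\Xpq pq_+;\Mackey A),\,\copt/(\kappa)\bigr) \Rightarrow H_\GG^{RO(\Pi B)}(\Xpq pq_+;\Mackey\Z).
\]
By Theorem~\ref{thm:cohomStructure} the input is free, hence flat, over $\copt$, so all higher $\operatorname{Tor}$ vanish, the spectral sequence collapses to its $0$-line, and the base-change isomorphism follows. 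In particular the target is free over $\copt/(\kappa)$, with the preferred basis $F_{p,q}(m)$ descending to a basis; to stay in the Mackey-functor-valued setting one runs the same argument for the Mackey-functor-valued theories and then passes to the $\GG/\GG$-level.

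It then remains to identify the ring structure, which is immediate from base change: the generators $\cwd,\cxwd,\cwt,\cxwt$ and the infinite-divisibility classes $\cxwt^{-k}\cwd^{\;p}$ and $\cwt^{-k}\cxwd^{\;q}$ all survive, and each defining relation of Theorem~\ref{thm:cohomStructure} reduces modulo $\kappa$. The relations $\cxwt\cwt = \xi$ and $\cwd^{\;p}\cxwd^{\;q} = 0$ are unchanged, since neither $\xi$ nor $0$ involves $\kappa$, while $\cwt\cxwd = (1-\kappa)\cxwt\cwd + e^2$ becomes $\cwt\cxwd = \cxwt\cwd + e^2$ once $\kappa = 0$, which is exactly the replacement asserted.

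The main obstacle is the collapse of the change-of-coefficients spectral sequence, and this is precisely where freeness is indispensable. The kernel of $\Mackey A \to \Mackey\Z$ is the Mackey functor concentrated at $\GG/\GG$ with value $\Z\kappa$, and $\kappa$ is a genuine zero-divisor in $\copt$ (for instance $\kappa g = 0$); a priori this could produce a nonzero connecting homomorphism and correction terms in the $\Mackey\Z$-cohomology. Flatness of $H_\GG^{RO(\Pi B)}(\Xpq pq_+;\Mackey A)$ over $\copt$ is what forces $\operatorname{Tor}^\copt_{>0} = 0$ and guarantees that no such corrections occur, so that changing coefficients really is computed on the nose by setting $\kappa = 0$.
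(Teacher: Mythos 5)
Your strategy---deducing the $\Mackey\Z$-coefficient result from Theorem~\ref{thm:cohomStructure} by a change-of-coefficients base change, with freeness over $\copt$ forcing the higher $\operatorname{Tor}$ terms to vanish---is exactly the justification the paper gives (it states the theorem as \cite[Corollary~5.4]{CHTFiniteProjSpace}, noting only that freeness lets one obtain the $\Mackey\Z$ answer by ``setting $\kappa=0$''). However, one step of your argument fails as written: the identification $H_\GG^{RO(\GG)}(S^0;\Mackey\Z) = \copt/(\kappa)$. The principal ideal $(\kappa)$ is strictly smaller than the kernel of the map $\copt\to H_\GG^{RO(\GG)}(S^0;\Mackey\Z)$. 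In grading $-m\sigma$, $m\geq 1$, we have $\copt^{-m\sigma} = \Z\{e^{-m}\kappa\}$, and since $ge^{-m}\kappa = 0$ we get $\kappa\cdot e^{-m}\kappa = (2-g)e^{-m}\kappa = 2e^{-m}\kappa$; hence $(\kappa)^{-m\sigma} = 2\Z\{e^{-m}\kappa\}$, so $\copt/(\kappa)$ retains a $\Z/2$ in every grading $-m\sigma$. But $H_\GG^{-m\sigma}(S^0;\Mackey\Z) \cong \tilde H_\GG^0(S^{m\sigma};\Mackey\Z) \cong \tilde H^0(S^{m\sigma}/\GG;\Z) = 0$, since integer-graded Bredon cohomology with constant coefficients is the cohomology of the orbit space, which here is connected. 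This is precisely what the paper means in saying that passing to $\Mackey\Z$ coefficients ``removes'' the elements $e^{-n}\kappa$: they die outright rather than becoming $2$-torsion. Consequently your asserted isomorphism $H_\GG^{RO(\Pi B)}(\Xpq pq_+;\Mackey\Z)\cong H_\GG^{RO(\Pi B)}(\Xpq pq_+;\Mackey A)\otimes_\copt \copt/(\kappa)$ is false---the right-hand side has spurious $\Z/2$'s in the shift of every basis element---and the $E_2$-term you wrote is not the $E_2$-term of the K\"unneth spectral sequence for $\bigl(D(\Xpq pq_+)\smsh H\Mackey A\bigr)\smsh_{H\Mackey A}H\Mackey\Z$, whose second variable must be the homotopy of $H\Mackey\Z$, i.e.\ the actual ring $H_\GG^{RO(\GG)}(S^0;\Mackey\Z)$.

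The repair is straightforward and keeps your (and the paper's) idea intact: run the same spectral sequence with second variable $H_\GG^{RO(\GG)}(S^0;\Mackey\Z)$ itself. Freeness of $H_\GG^{RO(\Pi B)}(\Xpq pq_+;\Mackey A)$ over $\copt$ still kills all higher $\operatorname{Tor}$, giving
\[
    H_\GG^{RO(\Pi B)}(\Xpq pq_+;\Mackey\Z)\cong H_\GG^{RO(\Pi B)}(\Xpq pq_+;\Mackey A)\otimes_\copt H_\GG^{RO(\GG)}(S^0;\Mackey\Z),
\]
so the $\Mackey\Z$-theory is free on the same preferred basis, and the algebra presentation is obtained by pushing the relations of Theorem~\ref{thm:cohomStructure} forward along the ring map $\copt\to H_\GG^{RO(\GG)}(S^0;\Mackey\Z)$. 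That map does send $\kappa\mapsto 0$, so $\cwt\cxwd = (1-\kappa)\cxwt\cwd + e^2$ becomes $\cwt\cxwd = \cxwt\cwd + e^2$, while the remaining relations are unchanged, as required.
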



Setting $\kappa=0$ in Theorem~\ref{thm:bezouttwo}, remembering that $e^m$ is 2-torsion, and paying attention to the abuses of notation
mentioned in the proof of that theorem, we get the following.

\begin{theorem}[B\'ezout's Theorem for Constant $\Z$ Coefficients]\label{thm:Zalgebraic}
Let $F$ be as in the B\'ezout context~\ref{context}.
Then the Euler class $e_\Z(F)\in H_\GG^{RO(\Pi B)}(\Xpq pq_+;\Mackey\Z)$ is given by
\[
    e_\Z(F) = 
        \begin{cases}
            \dfrac \Delta 2\tau_{n} P^{(m)}_{n} & \text{if\ \, $\Delta$, $\Delta_0$, and $\Delta_1$} \\
                & \text{\quad are even} \\
            \dfrac \Delta 2\tau_{n} P^{(m)}_{n} + e^{2(n-n_0-n_1)} P^{(m)}_{k-1}
                & \text{if\ \,$\Delta$ is even and} \\
                & \text{\quad  $\Delta_0$ or $\Delta_1$ is odd} \\
            \Delta P^{(m)}_{n} & \text{if\,\ $\Delta$ is odd}
        \end{cases}
\]
where, writing $\epsilon = 0$ or $1$ for the remainder on dividing
$n+n_0+n_1$ by $2$, we set
\begin{align*}
 P_{n}^{(m)} &=
   \begin{cases}
 	\cxwt^{-(n+n_0-n_1-2p)} \cwd^{\;p}\cxwd^{\;n-p}
		& \text{if\,\ $n+n_0-n_1 > 2p$} \\
	\cwt^{-(n-n_0+n_1-2q)} \cwd^{\;n-q}\cxwd^{\;q}
		& \text{if\,\ $n-n_0+n_1 > 2q$} \\
	\cxwt^\epsilon \cwd^{\;(n+n_0-n_1+\epsilon)/2}
			\cxwd^{\;(n-n_0+n_1-\epsilon)/2}
		& \text{otherwise.}
   \end{cases} 
\\
 \tau_{n} &= 
   \begin{cases}
 	\tau(\iota^{2(n-n_1-p)})
		& \text{if\,\ $n+n_0-n_1 > 2p$} \\
	\tau(\iota^{2(n-n_0-q)})
		& \text{if\,\ $n-n_0+n_1 > 2q$} \\
	\tau(\iota^{n-n_0-n_1-\epsilon})
		& \text{otherwise.}
   \end{cases}
\\ \intertext{and, when $\Delta$ is even and $\Delta_0$ or $\Delta_1$ is odd,}
   P^{(m)}_{k-1} &= \cwd^{\;n_0}\cxwd^{\;n_1}.
\end{align*}
\qed
\end{theorem}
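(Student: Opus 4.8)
The plan is to deduce this statement directly from Theorem~\ref{thm:bezouttwo} by applying the change-of-coefficients map induced by the map of Mackey functors $\Mackey{A}\to\Mackey{\Z}$. First I would recall that this map respects Euler classes, so $e_\Z(F)$ is precisely the image of $e(F)$; on the cohomology of a point it is the quotient that sets $\kappa = 0$, and as noted after Theorem~\ref{thm:ZcohomStructure} this forces $g = 2$ (equivalently $\tau(1) = 2$) and $2e = 0$. Because the $\Mackey{\Z}$-cohomology is free with the same generating classes (Theorem~\ref{thm:ZcohomStructure}), the images of our preferred basis elements $P^{(m)}_i$ again form a basis, so it suffices to reduce each of the coefficients $\alpha$, $\beta$, $\gamma$ of Theorem~\ref{thm:bezouttwo} modulo the relations $\kappa = 0$, $g = 2$, and $2e = 0$, matching the outcome against the three cases of the present statement.

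The bulk of the argument is then a case-by-case reduction following the split in Theorem~\ref{thm:bezouttwo}. When $\Delta$ is even (case~(1) there), the coefficient $\alpha = \tfrac\Delta2\tau_n$ carries through unchanged, giving the leading term $\tfrac\Delta2\tau_n P^{(m)}_n$, and the point is to determine the fate of $\beta P^{(m)}_k$ and $\gamma P^{(m)}_{k-1}$. Here I would invoke the parity dichotomy recorded before Theorem~\ref{thm:bezoutone}: when $\Delta$ is even, $\Delta_0$ and $\Delta_1$ share a parity, with both even exactly when $F$ has a summand of type~II. If both are even, then $\beta$ and $\gamma$ are integer multiples of terms $e^m\kappa$; resolving the abuses of notation from the proof of Theorem~\ref{thm:bezouttwo}, each such term either is a genuine $\kappa$-multiple (killed by $\kappa = 0$) or equals $2e^m$ times an integer (killed by $2e = 0$), so both terms vanish and we land in the first branch. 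If instead $\Delta_0$ and $\Delta_1$ are both odd, then $n_0 < p$, $n_1 < q$, and $n_0 + n_1 < n$, so $\bar n_0 = n_0$, $\bar n_1 = n_1$, $k-1 = n_0 + n_1$, and $\gamma$ unwinds to $\Delta_0 e^{2(n-n_0-n_1)}$; since $\Delta_0$ is odd this survives as $e^{2(n-n_0-n_1)}P^{(m)}_{k-1}$, while $\beta$ is an even multiple of a power of $e$ or of $\kappa$ and dies, giving the second branch.

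For the case $\Delta$ odd (cases~(2) and~(3) of Theorem~\ref{thm:bezouttwo}), I would use that there $n = n_0 + n_1$ and $k = n+1$, so $P^{(m)}_{k-1} = P^{(m)}_n$ and (since then $\epsilon = 0$ and neither large-exponent branch applies) $\tau_n = \tau(1)$, and that the $\beta$-term is a multiple of $e^{-2}\kappa$ or is zero, hence dies. The two surviving contributions both lie on $P^{(m)}_n$, with total coefficient $\tfrac{\Delta-\Delta_0}2\tau(1) + \Delta_0$ (or $\tfrac{\Delta-\Delta_1}2\tau(1) + \Delta_1$ when $\Delta_0 = 0$); applying $\tau(1) = 2$ collapses this to $\Delta$, yielding $\Delta P^{(m)}_n$ as claimed. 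The main obstacle throughout is not any deep computation but the careful bookkeeping of the abuses of notation flagged in the proof of Theorem~\ref{thm:bezouttwo}: one must track the sign of each exponent of $e$ together with the parities of $\Delta_0$ and $\Delta_1$ to decide whether a coefficient reduces to a $\kappa$-multiple, to an even multiple of a positive power of $e$, or to an odd such multiple, as only the last survives the passage to $\Mackey{\Z}$ coefficients.
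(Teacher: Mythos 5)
Your proposal is correct and takes essentially the same route as the paper: the paper's entire proof is the remark preceding the theorem, namely that one sets $\kappa = 0$ in Theorem~\ref{thm:bezouttwo}, remembers that $e^m$ is $2$-torsion (equivalently $\tau(1) = g = 2$), and pays attention to the abuses of notation flagged in that proof. Your case-by-case reduction (both degrees even, both odd, and $\Delta$ odd with $P^{(m)}_{k-1} = P^{(m)}_n$ and $\tau_n = \tau(1)$) is simply a careful expansion of that one-sentence argument.
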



While this result has the benefit of relative simplicity, it carries significantly less information than Theorem~\ref{thm:bezouttwo}.
In particular, we cannot reconstruct $\Delta_0$ and $\Delta_1$ from $e_\Z(F)$.
This follows from the formula in the theorem, but we can also look
again at the fixed-point map $(-)^\GG$ to see why this must happen.
As defined in \cite{CostenobleWanerBook}, the fixed-point map
takes $G$-equivariant cohomology with coefficients in a Mackey functor $\Mackey T$
to nonequivariant cohomology with coefficients in $\Mackey T^G$.
In the case of the group $\GG$, we have
\[
 \Mackey T^\GG = \Mackey T(\GG/\GG)/\tau(\Mackey T(\GG/e)).
\]
This gives $\Mackey A^\GG = \Z$, but $\Mackey\Z^\GG = \Z/2$.
We then get the following.

\begin{corollary}\label{cor:Zrestrictions}
With $F$ as in the B\'ezout context~\ref{context},
we have
\[
    e_\Z(F)^\GG = ( \Delta_0 \widehat c^{\;n_0}, \Delta_1 \widehat c^{\;n_1} ) \\
    \in H^{2a}(\Xp p_+;\Z/2) \dirsum H^{2(a-m)}(\Xq q_+;\Z/2),
\]
so
\[
    e_\Z(F)^\GG =
    \begin{cases}
        (0,0) & \text{if $\Delta_0$ and $\Delta_1$ are even} \\
        ( \widehat c^{\;n_0},  \widehat c^{\;n_1} ) & \text{if $\Delta_0$ or $\Delta_1$ is odd.}
    \end{cases}
\]
\end{corollary}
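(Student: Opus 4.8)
The plan is to deduce the corollary directly from the fixed-point behavior of the $\Mackey\Z$-valued theory, using the description of $e_\Z(F)^\GG$ that comes from the $\Mackey A$-coefficient calculation together with the change-of-coefficients map. First I would recall that the fixed-point map $(-)^\GG$ is a ring map that respects Euler classes, and that for constant $\Z$ coefficients it lands in nonequivariant cohomology with coefficients in the fixed-point Mackey functor $\Mackey\Z^\GG = \Z/2$ (as computed in the paragraph preceding the statement). Since $\rho$ and $(-)^\GG$ preserve Euler classes and the nonequivariant B\'ezout theorem identifies the fixed-set restrictions, the value $e_\Z(F)^\GG$ must be the mod-$2$ reduction of the integral answer $(\Delta_0\,\widehat c^{\;n_0},\,\Delta_1\,\widehat c^{\;n_1})$ already recorded in Theorem~\ref{thm:bezoutone}. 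This gives the first displayed formula, with the gradings $2a$ and $2(a-m)$ matching $n_0 = a$ and $n_1 = a-m$ from the grading $m(\omega-2)+2a+2b\sigma$ of $e_\Z(F)$.

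Next I would read off the case split from this formula. Reducing the integer coefficients $\Delta_0$ and $\Delta_1$ modulo $2$, the class $\Delta_0\,\widehat c^{\;n_0}$ vanishes in $H^{2n_0}(\Xp p_+;\Z/2)$ exactly when $\Delta_0$ is even, and similarly for $\Delta_1$; this produces the two cases of the second display. The only point requiring a brief justification is that the powers $\widehat c^{\;n_0}$ and $\widehat c^{\;n_1}$ are themselves nonzero mod $2$ in the relevant truncated polynomial rings, which holds precisely because $n_0 \leq p-1$ and $n_1 \leq q-1$ whenever the corresponding degree is odd and nonzero---this is exactly the constraint extracted in the discussion before Theorem~\ref{thm:bezoutone} (when $\Delta_0$ or $\Delta_1$ is odd we have $n_\II = 0$, forcing $n_0 < p$ and $n_1 < q$).

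As a cross-check, I would confirm consistency with the explicit $e_\Z(F)$ of Theorem~\ref{thm:Zalgebraic} by applying $(-)^\GG$ term by term: the class $\tau_n P_n^{(m)}$ restricts to $(0,0)$ under the fixed-point map (since $\tau(\iota^{2k})^\GG = 0$), while $P_{k-1}^{(m)} = \cwd^{\;n_0}\cxwd^{\;n_1}$ restricts to $(\widehat c^{\;n_0}, \widehat c^{\;n_1})$ using the recorded values $\cwd^{\;\GG} = (\cd,1)$ and $\cxwd^{\;\GG} = (1,\cd)$. This recovers the case distinction and confirms that the $\tau_n P_n^{(m)}$ term carries the information of $\Delta$ but contributes nothing to the fixed points, which is the structural reason $\Delta_0$ and $\Delta_1$ cannot be recovered from $e_\Z(F)$. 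I do not anticipate a serious obstacle here; the only genuine content is bookkeeping the gradings and verifying the nonvanishing of the Chern powers, so the main care needed is in matching the rank constraints to the range of exponents rather than in any delicate computation.
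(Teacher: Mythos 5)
Your proposal is correct and follows essentially the same route as the paper: the paper proves the corollary by observing that the fixed-point map commutes with the change of coefficients $\Mackey A\to\Mackey\Z$ (a commutative square whose bottom arrow is reduction mod $2$), so that $e_\Z(F)^\GG$ is exactly the mod-$2$ reduction of $e(F)^\GG=(\Delta_0\,\widehat c^{\;n_0},\Delta_1\,\widehat c^{\;n_1})$ recorded in Theorem~\ref{thm:bezoutone}, which is the content of your first two paragraphs. One small caveat about your optional cross-check: in the case $\Delta$ odd, Theorem~\ref{thm:Zalgebraic} gives $e_\Z(F)=\Delta P^{(m)}_{n}$ with an integer (not $\tau$-divisible) coefficient, and there $(P^{(m)}_{n})^\GG=(\widehat c^{\;n_0},\widehat c^{\;n_1})$ is nonzero, so in that case it is this term, rather than $P^{(m)}_{k-1}$, that produces the fixed-point class.
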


\begin{proof}
From the commutativity of the diagram
\[
 \xymatrix{
    H_\GG^{RO(\Pi B)}(\Xpq pq_+;\Mackey A) \ar[d]_{(-)^\GG} \ar[r]
        & H_\GG^{RO(\Pi B)}(\Xpq pq_+;\Mackey\Z) \ar[d]^{(-)^\GG} \\
    H^\Z(\Xpq pq_+^\GG;\Z) \ar[r] & H^\Z(\Xpq pq_+^\GG;\Z/2),
 }
\]
where the horizontal arrows are given by change of coefficients,
$e_\Z(F)^\GG$ is just the reduction of $e(F)^\GG$ modulo 2.
\end{proof}

Thus, from this Euler class we cannot recover $\Delta_0$ and $\Delta_1$, only their parities.
This goes back to the fact that, because $g = 2$, cohomology with $\Mackey\Z$ coefficients
cannot distinguish between a free orbit and two fixed points, hence retains only parity information
about fixed points.

For example,
in the case $n = p+q-1$ discussed in detail in \cite{CHTFiniteProjSpace},
we can think of the Euler class in terms of the finite $\GG$-set given by
the zero locus of a section of $F$, or the intersection of the hypersurfaces given by
the zero loci of sections of the line bundles making up $F$.
The Euler class with Burnside ring coefficients completely determines this $\GG$-set,
including how many fixed points lie in each component of $\Xpq pq^\GG$.
The Euler class with constant $\Z$ coefficients can tell us only the parity
of the number of fixed points in each component.

\section{Comparison with Borel cohomology}\label{sec:Borel}

Borel cohomology was the first theory thought of as equivariant ordinary cohomology,
but is a considerably weaker theory than Bredon cohomology.
(See, for example, May's discussion in \cite{May:Borel}.)
There is a map from ordinary cohomology with $\Mackey\Z$ coefficients to Borel cohomology,
so the latter is also weaker than cohomology with $\Mackey\Z$ coefficients.
To see how much weaker, let us look at the calculation of $e(F)$ in Borel cohomology.

We take Borel cohomology to be the $RO(\GG$)-graded theory defined on based $\GG$-spaces by
\[
    \Borel_\GG^{RO(\GG)}(X) = H_\GG^{RO(\GG)}((E\GG)_+\smsh X),
\]
where, as usual, we use Burnside ring coefficients on the right, but suppress them from the notation.
(Because $E\GG$ is free, and $\Mackey A\to \Mackey \Z$ is an isomorphism at the $\GG/e$ level,
we could instead use $\Mackey\Z$ coefficients and get naturally isomorphic results.)
This is the usual Borel cohomology with $\Z$ coefficients but we have expanded the grading from
the common $\Z$ to $RO(\GG)$.
As shown in \cite{Co:BGU1preprint},
the Borel cohomology of a point is $\copt$ with $\xi$ inverted:
\[
    \Borel_\GG^{RO(\GG)}(S^0) \iso \Z[e, \xi, \xi^{-1}]/\langle 2e \rangle,
\]
where $\deg e = \sigma$ and $\deg \xi = 2\sigma - 2$ as before.
In the map $\copt \to \Borel_\GG^{RO(\GG)}(S^0)$, $\kappa$ goes to 0.
As with cohomology with $\Mackey\Z$ coefficients, Borel cohomology cannot tell the difference between $g$ and 2.

Note that, if we restrict to the $\Z$ grading, as is usually done, we get a polynomial
algebra in $e^2\xi^{-1}$ modulo $2e^2\xi^{-1} = 0$, a copy of the group cohomology of $\GG$ with $\Z$ coefficients.
If we restrict the grading to $\sigma+\Z$, we see the group cohomology of $\GG$ with twisted $\Z$ coefficients.
That the twisted and untwisted cohomologies can be combined in a single algebra like this seems to have
been first observed by \v{C}adek in \cite{Cad:BOnTwisted}.

Because the ordinary $\GG$-cohomology of $\Xpq pq$ is free over the cohomology of a point, we obtain its
Borel cohomology also by inverting $\xi$.
On doing so, the elements $\cxwt$ and $\cwt$ become invertible, with the result that, if we continued
to grade on $RO(\Pi B)$, the groups outside the $RO(\GG)$ grading would all be isomorphic to groups
in the $RO(\GG)$ grading via multiplication by an appropriate power of, say, $\cxwt$.
So we lose nothing by considering the $RO(\GG)$-graded part only.
To give the explicit result, let $\cd$ be the image of $\cxwt\cwd$ in $\Borel_\GG^{2\sigma}(\Xpq pq_+)$.
The following is then a corollary of Theorem~\ref{thm:cohomStructure}.

\begin{corollary}
Let $0 \leq p < \infty$ and $0 \leq q < \infty$ with $p+q > 0$.
Then $\Borel_{\GG}^{RO(\GG}(\Xpq{p}{q}_+)$ is a free module over $\Borel_{\GG}^{RO(\GG)}(S^0)$.
As a (graded) commutative algebra over $\Borel_{\GG}^{RO(\GG)}(S^0)$, 
$\Borel_{\GG}^{RO(\Pi B)}(\Xpq{p}{q}_+)$ is generated by $\cd$ in degree $2\sigma$,
which satisfies the single relation
\[
    \cd[p](\cd+e^2)^q = 0. 
\]
\qed
\end{corollary}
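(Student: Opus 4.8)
The plan is to obtain the corollary from Theorem~\ref{thm:cohomStructure} by carrying out the passage to Borel cohomology, which (as recorded just above) inverts $\xi$ and sends $\kappa$ to $0$. First I would observe that freeness is automatic: localization of a free module is free over the localized ring, so the freeness of $H_\GG^{RO(\Pi B)}(\Xpq pq_+)$ over $\copt$ from Theorem~\ref{thm:cohomStructure} immediately gives freeness of $\Borel_\GG^{RO(\Pi B)}(\Xpq pq_+)$ over $\Borel_\GG^{RO(\GG)}(S^0)$. The relation $\cxwt\cwt = \xi$ shows that, once $\xi$ is a unit, both $\cxwt$ and $\cwt$ are units; as already noted, multiplication by powers of $\cxwt$ then identifies every grading outside $RO(\GG)$ with one inside it, so it suffices to analyze the $RO(\GG)$-graded part. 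Here it is important that $\cd = \cxwt\cwd$ itself sits in degree $2\sigma$, with zero $\omega$-component, so its powers remain inside the $RO(\GG)$ grading.

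The core of the argument is to rewrite the presentation of Theorem~\ref{thm:cohomStructure} using the single class $\cd$. Since $\cxwt$ is a unit, $\cwd = \cxwt^{-1}\cd$; and setting $\kappa = 0$ in $\cwt\cxwd = (1-\kappa)\cxwt\cwd + e^2$ yields $\cwt\cxwd = \cd + e^2$, so $\cxwd = \cwt^{-1}(\cd + e^2)$ as $\cwt$ is also a unit. Thus every generator, and hence the whole algebra, is generated over $\Borel_\GG^{RO(\GG)}(S^0)$ by $\cd$. Substituting these expressions into the remaining relation $\cwd^{\;p}\cxwd^{\;q} = 0$ gives $\cxwt^{-p}\cwt^{-q}\cd[p](\cd+e^2)^q = 0$, and cancelling the unit $\cxwt^{-p}\cwt^{-q}$ produces the asserted relation $\cd[p](\cd+e^2)^q = 0$.

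To see that this is the only relation, I would finish with a rank count. The polynomial $\cd[p](\cd+e^2)^q$ is monic of degree $p+q$ in $\cd$, so $\Borel_\GG^{RO(\GG)}(S^0)[\cd]/\langle\cd[p](\cd+e^2)^q\rangle$ is free of rank $p+q$ on $1,\cd,\ldots,\cd[p+q-1]$. On the other hand the $RO(\GG)$-graded part of the localized cohomology is free of rank $p+q$, inherited from the $(p+q)$-element basis of Theorem~\ref{thm:cohomStructure}. The evident surjection from the quotient algebra onto the cohomology is therefore a surjection of free modules of equal finite rank over $\Borel_\GG^{RO(\GG)}(S^0)$, hence an isomorphism, so no further relation can occur.

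The step I expect to require the most care is the grading bookkeeping in the first paragraph: I must check that collapsing the $\omega$-direction via the units $\cxwt$ and $\cwt$ genuinely loses no information and that the final rank comparison is made in matching gradings. Once that is in place, the algebraic manipulation turning $\cwd^{\;p}\cxwd^{\;q} = 0$ into $\cd[p](\cd+e^2)^q = 0$ is routine, and the corollary follows.
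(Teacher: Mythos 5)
Your proof is correct and takes essentially the same route as the paper, which treats the corollary as immediate from the discussion preceding it: invert $\xi$, note that $\cxwt$ and $\cwt$ become units so nothing is lost by collapsing to the $RO(\GG)$ grading, and rewrite the generators and the relation $\cwd^{\;p}\cxwd^{\;q}=0$ in terms of $\cd = \cxwt\cwd$, giving $\cd[p](\cd+e^2)^q = 0$. Your closing rank-count argument (a surjection of free modules of equal finite rank over a commutative ring is an isomorphism) is a detail the paper leaves implicit, and it correctly justifies that this is the \emph{only} relation.
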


Of course, we could also use as a generator the element $c' = \xi^{-1}\cd$ in degree 2, but the
relation is then
\[
    (c')^p(c' + e^2\xi^{-1})^q = 0.
\]
For the simplicity of the relation, and to keep the generator more closely related to an element from
ordinary cohomology, we prefer to use $\cd$.

We view $\cd$ as the Euler class of $\omega\dual$. The Euler class of $\chi\omega\dual$ is then
$\cd + e^2$, the image of $\cwt\cxwd$. In doing this, we are choosing to say
that $\omega$ is a rank $2\sigma$ bundle over $E\GG\times \Xpq pq$.
Because $E\GG$ is free, we are as free to say $\omega$ has rank $2\sigma$ as to say it has rank 2.

Another way of seeing that $e(\chi\omega) = \cd + e^2$ is to recall that $\chi\omega = \omega\tensor_\C \C^{\sigma}$,
then use the additive formal group law of nonequivariant ordinary cohomology and the fact that $e(\C^\sigma) = e^2$.

Now consider the Euler classes of the bundles $O(d)$ and $\chi O(d)$, all of which we will think
of as having rank $2\sigma$. As a corollary of \cite[6.5]{CHTFiniteProjSpace},
or as a consequence of the formal group law for nonequivariant cohomology, we have the following.

\begin{proposition}
In the Borel cohomology of $\Xpq pq$, we have
\begin{align*}
    e(O(d)) &= d\cd \\ \intertext{and}
    e(\chi O(d)) &= d\cd + e^2
\end{align*}
for every $d\in\Z$.
\qed
\end{proposition}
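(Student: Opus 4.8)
The plan is to deduce both formulas from the additive formal group law, taking as inputs only the two identities already in hand: $e(O(1)) = \cd$ (our chosen generator, which we view as the Euler class of $\omega\dual = O(1)$) and $e(\C^\sigma) = e^2$. The key observation is that the integer-graded part of $\Borel_\GG^{RO(\GG)}(\Xpq pq_+)$ is the ordinary (nonequivariant) cohomology of the Borel construction $E\GG\times_\GG\Xpq pq$, hence is complex orientable with the \emph{additive} formal group law. Concretely, an equivariant complex line bundle $L$ over $\Xpq pq$ induces an ordinary complex line bundle $E\GG\times_\GG L$ over the Borel construction, tensor products are respected, and the Borel Euler class of $L$ (in degree $2$) is the first Chern class of the induced bundle; additivity of $c_1$ then reads $e(L\tensor_\C L') = e(L) + e(L')$ in degree $2$.

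First I would record the normalization. We are regarding every line bundle as having rank $2\sigma$ rather than the naive rank $2$, which amounts to multiplying the honest degree-$2$ Euler class by $\xi$. Since $\xi$ is a unit after inversion, this identification is additive and carries the degree-$2$ formal group law to the identical relation $e(L\tensor_\C L') = e(L) + e(L')$ for Euler classes normalized to live in degree $2\sigma$. Thus it suffices to compute in the rank-$2\sigma$ normalization, where $e(O(1)) = \cd$ and $e(\C^\sigma) = e^2$ by hypothesis, and where the trivial bundle $O(0) = \C$ has Euler class $0$.

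The computation is then immediate. Writing $O(d) = O(1)^{\tensor d}$ and applying additivity gives $e(O(d)) = d\,e(O(1)) = d\cd$; for $d = 0$ this is the trivial bundle, and for $d < 0$ it follows from $e(O(1)) + e(O(-1)) = e(O(0)) = 0$, so that $e(O(-1)) = -\cd$. Since $\chi O(d) = O(d)\tensor_\C\C^\sigma$, one further application of additivity yields
\[
    e(\chi O(d)) = e(O(d)) + e(\C^\sigma) = d\cd + e^2,
\]
as claimed; the relation $2e = 0$ guarantees that no parity ambiguity in the coefficient of $e^2$ can arise.

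I expect the only genuine obstacle to be the input $e(\C^\sigma) = e^2$, which is what ties the abstract formal-group computation to the explicit ring $\Z[e,\xi,\xi^{-1}]/\langle 2e\rangle$: it is the assertion that the sign-representation line bundle over the Borel construction has first Chern class the chosen degree-$2$ generator (that is, $e^2$ after rank-$2\sigma$ normalization), which we may take from the description of $\Borel_\GG^{RO(\GG)}(S^0)$. As a cross-check I would rederive the same formulas from \cite[6.5]{CHTFiniteProjSpace}: restrict the four ordinary-cohomology Euler classes listed in the proof of Theorem~\ref{thm:bezoutone} to Borel cohomology by setting $\kappa = 0$ and using $\tau(1)\mapsto 2$, $\tau(\iota^{\pm2})\mapsto 2\xi^{\pm1}$, $\cxwt\cwd\mapsto\cd$, and $\cwt\cxwd\mapsto\cd + e^2$, and then multiply by the appropriate monomial in $\cxwt$ and $\cwt$ (hence in $\xi$) to normalize each bundle to rank $2\sigma$. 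The bookkeeping of these normalizing factors is the fiddly part of that alternative route, which is why the formal-group-law argument is preferable.
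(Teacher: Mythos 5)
Your proposal is correct and takes essentially the same approach as the paper: the paper deduces the proposition ``as a corollary of \cite[6.5]{CHTFiniteProjSpace}, or as a consequence of the formal group law for nonequivariant cohomology,'' having just explained that $e(\chi\omega) = \cd + e^2$ follows from $\chi\omega = \omega\tensor_\C \C^\sigma$, the additive formal group law, and $e(\C^\sigma) = e^2$. Your main argument (with the rank-$2\sigma$ versus degree-$2$ normalization via the unit $\xi$ spelled out, a detail the paper leaves implicit) is exactly the paper's formal-group-law route, and your cross-check is its other cited route.
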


\begin{theorem}[B\'ezout's Theorem for Borel Cohomology]
Let $F$ be as in the B\'ezout context~\ref{context}.
The Euler class of $F$ in the Borel cohomology of $\Xpq pq$ is
\[
    e_{BH}(F) = 
    \begin{cases}
        \Delta\cd[n] & \text{if $\Delta$, $\Delta_0$, and $\Delta_1$} \\
            & \text{\quad are even} \\
        \Delta\cd[n] + e^{2(n-n_0-n_1)}\cd[n_0](\cd + e^2)^{n_1}
            & \text{if $\Delta$ is even and} \\
            & \text{\quad $\Delta_0$ or $\Delta_1$ is odd } \\
        \Delta\cd[n_0](\cd + e^2)^{n-n_0} & \text{if $\Delta$ is odd.} \\
    \end{cases}
\]
\end{theorem}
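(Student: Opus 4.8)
The plan is to compute $e_{BH}(F)$ directly, exploiting multiplicativity of the Euler class together with the classification of the line-bundle summands into the four types I--IV. Since $F$ is a direct sum of $n$ line bundles and the Euler class is multiplicative over Whitney sums, $e_{BH}(F)$ is the product of the Euler classes of the summands, each of which the preceding proposition evaluates as $(2d+1)\cd$, $2d\cd$, $(2d+1)\cd + e^2$, or $2d\cd + e^2$ according as the summand has type I, II, III, or IV. First I would group the factors by type, writing
\[
    e_{BH}(F) = \Bigl(\prod_{\I}(2d_i+1)\cd\Bigr)\Bigl(\prod_{\II}2d_j\cd\Bigr)\Bigl(\prod_{\III}\bigl((2d_k+1)\cd + e^2\bigr)\Bigr)\Bigl(\prod_{\IV}\bigl(2d_l\cd + e^2\bigr)\Bigr),
\]
so that types I and II immediately contribute $d_\I d_\II\,\cd[n_0]$, using $n_0 = n_\I + n_\II$.

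Next I would reduce the type III and type IV products using the single defining relation $2e = 0$ of $\Borel_\GG^{RO(\GG)}(S^0)$ and its consequence that $2\,\cd[a]e^{2k} = 0$ for $k \geq 1$. For an odd integer $a$ one has $a\cd + e^2 = a(\cd + e^2)$, since $(a-1)e^2 = 0$; applying this to the type III factors gives $d_\III(\cd + e^2)^{n_\III}$. For type IV, every coefficient $2d_l$ is even, so on expanding $\prod_{\IV}(2d_l\cd + e^2)$ each mixed term carries both an even integer coefficient and a positive power of $e$ and hence vanishes, leaving $d_\IV\,\cd[n_\IV] + e^{2n_\IV}$ when $n_\IV \geq 1$ (and $1$ when $n_\IV = 0$). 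Assembling these yields
\[
    e_{BH}(F) = d_\I d_\II\,\cd[n_0]\cdot d_\III(\cd + e^2)^{n_\III}\cdot\bigl(d_\IV\,\cd[n_\IV] + e^{2n_\IV}\bigr).
\]

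Finally I would read off the three cases, which match exactly the parity regimes recorded before Theorem~\ref{thm:bezoutone}: all three degrees even iff $n_\II > 0$; $\Delta$ even with $\Delta_0$ or $\Delta_1$ odd iff $n_\II = 0$ and $n_\IV > 0$; and $\Delta$ odd iff $n_\II = n_\IV = 0$. In each regime the product above is simplified by the two principles that an even integer times a positive power of $e$ is zero and that an odd integer times a $2$-torsion element equals that element. When $\Delta$ is odd one obtains $\Delta\cd[n_0](\cd + e^2)^{n-n_0}$ directly, since then $n_\III = n - n_0$. When $n_\II = 0$ and $n_\IV > 0$, the leading term $\Delta\cd[n-n_1](\cd + e^2)^{n_1}$ collapses to $\Delta\cd[n]$ because $\Delta$ is even, while the residual term $d_\I d_\III\,e^{2n_\IV}\cd[n_0](\cd + e^2)^{n_1}$ becomes $e^{2(n-n_0-n_1)}\cd[n_0](\cd + e^2)^{n_1}$ since $d_\I d_\III$ is odd and $e^{2n_\IV}$ is $2$-torsion, using $n - n_0 - n_1 = n_\IV$. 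When $n_\II > 0$ the coefficient $d_\I d_\II d_\III$ is even, so the leading term again collapses to $\Delta\cd[n]$ and the term carrying $e^{2n_\IV}$ vanishes.

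The hard part will be the exponent and parity bookkeeping in the middle regime $n_\II = 0$, $n_\IV > 0$: there one must simultaneously see that the even coefficient $\Delta$ kills all the $e$-terms of the leading piece, yet that the odd coefficient $d_\I d_\III$ lets a genuine $e^{2(n-n_0-n_1)}\cd[n_0](\cd + e^2)^{n_1}$ term survive, and confirm that the exponents obtained from $n_0 = n_\I + n_\II$, $n_1 = n_\II + n_\III$, and $n - n_0 - n_1 = n_\IV - n_\II$ reproduce precisely those in the statement. An alternative route is to push the constant-$\ZZ$ formula of Theorem~\ref{thm:Zalgebraic} through the coefficient map to Borel cohomology after inverting $\xi$, but tracking the now-invertible classes $\cxwt$ and $\cwt$ and the basis elements $P^{(m)}_i$ there is messier than the formal-group-law computation above.
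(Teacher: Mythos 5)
Your proposal is correct and takes essentially the same approach as the paper: the paper's own proof is a one-line remark that the formulas ``can be derived from the preceding proposition or from Theorem~\ref{thm:Zalgebraic},'' and you have carried out the first of these routes in full, using multiplicativity of the Euler class, the values $e(O(d)) = d\cd$ and $e(\chi O(d)) = d\cd + e^2$, and the relation $2e = 0$. Your type-by-type reduction and the identification of the three parity regimes with $n_\II > 0$, $n_\II = 0 < n_\IV$, and $n_\II = n_\IV = 0$ all check out.
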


\begin{proof}


These formulas can be derived from the preceding proposition or from
Theorem~\ref{thm:Zalgebraic}, using the fact that $\tau(1) = 2$
in Borel cohomology.
\end{proof}

As we saw with ordinary cohomology with $\Mackey\Z$ coefficients,
the Euler class in Borel cohomology contains significantly less information
than the one in ordinary cohomology with Burnside ring coefficients. 
The fixed point map would be
\begin{multline*}
   (-)^\GG\colon  H_\GG^{RO(\GG)}((E\GG)_+\smsh \Xpq pq_+) \\ \to H^\Z(((E\GG)_+\smsh \Xpq pq_+)^\GG;\Z)
    = H^\ZZ(*;\Z) = 0.
\end{multline*}
Thus, Borel cohomology contains no information at all about fixed points.





\bibliography{Bibliography}{}
\bibliographystyle{amsplain} 

\end{document}